\date{}
\title{\vspace{-0.8cm}Hamiltonicity, independence number, and pancyclicity}
\author{
Choongbum Lee \thanks{Department of Mathematics, UCLA, Los
Angeles, CA, 90095. Email: choongbum.lee@gmail.com. Research
supported in part by a Samsung Scholarship.}
\and
Benny Sudakov \thanks{Department of Mathematics, UCLA, Los Angeles, CA 90095.
Email: bsudakov@math.ucla.edu. Research supported
in part by NSF grant DMS-1101185, NSF CAREER award DMS-0812005 and by USA-Israeli BSF grant.}
}
\theoremstyle{plain}
\newtheorem{THM}{Theorem}[section]
\newtheorem{DEF}[THM]{Definition}
\newtheorem{PROP}[THM]{Proposition}
\newtheorem{LEMMA}[THM]{Lemma}
\theoremstyle{definition}
\newenvironment{pfof}[1]{\noindent{\bf Proof of #1.}}{\qed\medskip}
\newenvironment{REM}{\noindent\textbf{Remark.}}{\medskip}
\renewcommand{\deg}{\textrm{d}}
\begin{document}
\maketitle

\begin{abstract}
A graph on $n$ vertices is called pancyclic if it contains a cycle
of length $\ell$ for all $3 \le \ell \le n$. In 1972, Erd\H{o}s proved
that if $G$ is a Hamiltonian graph
on $n > 4k^4$ vertices with independence number $k$, then $G$
is pancyclic. He then suggested that $n = \Omega(k^2)$ should already be
enough to guarantee pancyclicity.
Improving on his and some other later results, we prove that there
exists a constant $c$ such that $n > ck^{7/3}$ suffices.
\end{abstract}

\section{Introduction}

A {\em Hamilton cycle} of a graph is a cycle which passes through
every vertex of the graph exactly once, and a graph is called
{\em Hamiltonian} if it contains a Hamilton cycle. Determining whether
a given graph is Hamiltonian is one of the central questions in graph
theory, and there are numerous results
which establish sufficient conditions for Hamiltonicity.
For example, a celebrated result of Dirac asserts that every graph of
minimum degree at least $\lceil n/2 \rceil$ is Hamiltonian.
A graph is {\em pancyclic} if it contains a cycle of length $\ell$ for
all $3 \le \ell \le n$. By definition, every pancyclic graph is Hamiltonian,
but it is easy to see that the converse is not true. Nevertheless,
these two concepts are closely related and many nontrivial conditions which imply
Hamiltonicity also imply pancyclicity of a graph. For instance,
extending Dirac's Theorem,
Bondy \cite{Bondy1} proved that every graph of minimum degree at least
$\left\lceil n/2 \right\rceil$ either is the complete bipartite $K_{\lceil n/2 \rceil, \lfloor n/2 \rfloor}$, or is pancyclic. Moreover, in \cite{Bondy2}, he
made a meta conjecture in this context which says that almost any
non-trivial condition on a graph which implies that the graph
is Hamiltonian also implies that the graph is pancyclic
(there may be a simple family of exceptional graphs).

Let the {\em independence number} $\alpha(G)$ of a graph $G$ be the
order of a maximum independent set of $G$. A classical result of
Chv\'{a}tal and Erd\H{o}s \cite{ChEr} says that every graph $G$
whose vertex connectivity (denoted $\kappa(G)$) is at least as large
as its independence number is Hamiltonian. Motivated by Bondy's
metaconjecture, Amar, Fournier, and Germa \cite{AmFoGe} obtained
several results on the lengths of cycles in a graph $G$ that
satisfies the Chv\'atal-Erd\H{o}s condition $\kappa(G) \ge
\alpha(G)$, and conjectured that if such a graph $G$ is not
bipartite then either $G=C_5$, or $G$ contains cycles of length
$\ell$ for all $4 \le \ell \le n$ (Lou \cite{Lou} made some partial
progress towards this conjecture). In a similar context, Jackson and
Ordaz \cite{JaOr} conjectured that that every graph $G$ with
$\kappa(G) > \alpha(G)$ is pancyclic. Keevash and Sudakov
\cite{KeSu} proved that there exists an absolute constant $c$ such
that $\kappa(G) \ge c \alpha(G)$ is sufficient for pancyclicity.

In this paper, we study a relation between Hamiltonicity,
pancyclicity, and the independence number of a graph. Such relation
was first studied by Erd\H{o}s \cite{Erdos}. In 1972, he proved a
conjecture of Zarins by establishing the fact that every Hamiltonian
graph $G$ on $n \ge 4k^4$ vertices with $\alpha(G) \le k$ is
pancyclic (see also \cite{FlLiMaWo} for a different proof of a
weaker bound). Erd\H{o}s also suggested that the bound $4k^4$ on the
number of vertices is probably not tight, and that the correct order
of magnitude should be $\Omega(k^2)$. The following graph shows that
this, if true, is indeed best possible. Let $K_1, \cdots, K_k$ be
disjoint cliques of size $k-2$, where each $K_i$ has two
distinguished vertices $v_i$ and $w_i$. Let $G$ be the graph
obtained by connecting $v_i \in K_i$ and $w _{i+1} \in K_{i+1}$ by
an edge (here addition is modulo $k$). One can easily show that this
graph is Hamiltonian, has $k(k-2)$ vertices, and independence number
$k$. However, this graph does not contain a cycle of length $k-1$
(thus is not pancyclic), since every cycle either is a subgraph of
one of the cliques, or contains at least one vertex from each clique
$K_i$. The former type of cycles have length at most $k-2$, and the
later type of cycles have length at least $2k$.

Recently, Keevash and Sudakov \cite{KeSu} improved Erd\H{o}s' result
and showed that $n>150k^3$ already implies pancyclicity. Our main
theorem further improves this bound.

\begin{THM} \label{thm_main2}
There exists a constant $c$ such that for every positive integer
$k$, every Hamiltonian graph on $n \ge ck^{7/3}$ vertices with
$\alpha(G) \le k$ is pancyclic.
\end{THM}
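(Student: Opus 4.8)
The plan is to use the given Hamilton cycle $C=v_1v_2\cdots v_nv_1$ as a skeleton and to realise every length $3\le\ell\le n-1$ by rerouting pieces of $C$ along its chords. A chord $v_av_b$ whose shorter arc of $C$ has length $d$ yields a cycle of length $d+1$ (short arc plus chord) and one of length $n-d+1$ (long arc plus chord); and several chords that are pairwise nested, or pairwise crossing, can be spliced along $C$ to obtain cycles of still other lengths. The one structural input we get for free from $\alpha(G)\le k$ is that short chords are unavoidable: every arc of $2k+1$ consecutive vertices of $C$ contains a chord of span at most $2k$, since otherwise every second vertex of that arc would form an independent set of size $k+1$. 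In particular $C$ has $\Omega(n/k)$ chords of span $O(k)$, and $e(G)\ge n(n-1)/(2k)$ by Tur\'an applied to $\overline G$.

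Suppose, towards a contradiction, that $G$ has no cycle of some length $\ell$ with $3\le\ell\le n-1$; the goal is to deduce $\alpha(G)>k$ once $n\ge ck^{7/3}$. Lengths $\ell$ close to $n$ are comparatively harmless — they come from the abundant short chords together with a little local surgery, interpolating between two nearby nested short chords — and small $\ell$ can be extracted from the density $e(G)\ge n(n-1)/(2k)$; so the real difficulty is an intermediate range of $\ell$. For such $\ell$, the absence of $C_\ell$ first of all rules out every chord whose shorter arc has length $\ell-1$ or $n-\ell+1$, and, much more restrictively, rules out every configuration of chords from which the splicing operations could assemble a cycle of length $\ell$. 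The splicing itself needs a reservoir of ``flexible'' paths — paths between two fixed endpoints realising every length in a long interval — which I would build by P\'osa-type rotation--extension applied to long paths obtained from Gallai--Milgram (a vertex-disjoint path cover of size at most $\alpha(G)=k$, hence a path on at least $n/k$ vertices).

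The heart of the proof is then to show that all these forbidden configurations force a very rigid picture: the chords of $C$ must be confined to a small number of clusters, each spanning only a short arc, with long chord-free gaps in between — the global shape of the extremal example in the Introduction, where $C$ traverses a cyclic chain of cliques. From such a picture one manufactures a large independent set by taking one vertex from each of many pairwise-far gaps and enlarging it by a spread-out selection inside one cluster; that vertices chosen from distinct gaps are non-adjacent in $G$ (not merely along $C$) is itself something the structural analysis has to guarantee. This yields more than $k$ pairwise non-adjacent vertices unless the number of clusters, their total span, and $n$ satisfy a tight numerical relation, and since $\ell$ must remain unreachable by splicing over the entire intermediate range, that relation forces $n=O(k^{7/3})$. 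I expect this quantitative balancing — how few and how short the clusters are forced to be, how long the gaps must be, how much of the length spectrum splicing still covers, and how these trade off against the size of the independent set — to be the \textbf{main obstacle}, and also the step that produces the exponent $7/3$ in place of the conjectural $2$ or the previously known $3$.

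Finally, the construction recalled in the Introduction has $\Theta(k^2)$ vertices, is Hamiltonian, has independence number exactly $k$, and is not pancyclic, so no argument along these lines can bring the threshold below $k^2$; hence the estimates in the balancing step must be close to optimal, and it is a sharper form of precisely this step that improves Keevash and Sudakov's bound $150k^3$ to $ck^{7/3}$.
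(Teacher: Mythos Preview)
Your proposal misidentifies where the difficulty lies. You call lengths close to $n$ ``comparatively harmless'' and locate the hard case in an intermediate range; the paper's analysis shows precisely the opposite. A short chord of span $d$ gives a cycle of length $n-d+1$, but the independence bound only guarantees \emph{some} span $d\le 2k$ in each window, not every span, so ``local surgery between nearby nested short chords'' does not on its own produce length exactly $n-1$. In fact the entire technical content of the paper (Sections~3--4: the red/blue/green auxiliary graph, the structural Lemma~\ref{lemma_structural}, and the semi-triangle argument of Lemmas~\ref{lemma_mindegreereducedgraph} and~\ref{lemma_semitriangle}) is devoted to forcing a single cycle of length $n-1$, and it is precisely there that the exponent $7/3$ arises, via the numerics $|I_i'|\ge 6k^{1/3}$, $k^{1/3}$ selected vertices, and $k^{1/3}$ assigned main sets per vertex. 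Once that step is done, the paper iterates it (Theorem~\ref{thm_mainstrong}) to cover the long range, and then invokes the Keevash--Sudakov Theorem~\ref{thm_smallcycle} and Lemma~\ref{lemma_medcycle} for the short and medium ranges; these latter steps need only $n=\Omega(k^2)$.

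Beyond this misdiagnosis, the proposal contains no mechanism that would actually yield $7/3$ rather than $3$ or $5/2$: the ``rigid cluster'' picture is asserted, not derived, and the promised ``quantitative balancing'' is left entirely unspecified. The P\'osa rotation and Gallai--Milgram ideas you mention do not appear to interact with the chord structure of the fixed Hamilton cycle in any way that has been made precise. As written, this is a plausible heuristic narrative but not a proof sketch: the step you flag as the main obstacle is indeed an obstacle, but you have not indicated any idea for overcoming it, and the step you dismiss as easy is the one that carries the whole argument.
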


Suppose that one established the fact that for some function $f(k)$,
every Hamiltonian graph on $n \ge f(k)$ vertices with $\alpha(G) \le
k$ contains a cycle of length $n-1$. Then, by iteratively applying
this result, one can easily see that for every constant $C \ge 1$,
every graph on $n \ge Cf(k)$ vertices with $\alpha(G) \le k$
contains cycles of all length between $\frac{n}{C}$ and $n$. This
simple observation were used in both \cite{Erdos} and \cite{KeSu},
where they found cycles of length linear in $n$ using this method,
and then found cycles of smaller lengths using other methods. Thus
the problem finding a cycle of length $n-1$ is a key step in proving
pancyclicity. Keevash and Sudakov suggested that if one just is
interested in this problem, then the bound between the number of
vertices and independence number can be significantly improved. More
precisely, they asked whether there is an absolute constant $c$ such
that every Hamiltonian graph on $n \ge ck$ vertices with
independence number $k$ contains a cycle of length $n-1$.

Despite the fact that this bound suggested by Keevash and Sudakov is
only linear in $k$, even improving Erd\H{o}s' original estimate of
$n = \Omega(k^3)$ was not an easy task. Moreover, as we will explain
in the concluding remarks, currently the bottleneck of proving
pancyclicity lies in this step of finding a cycle of length $n-1$.
Thus in order to prove Theorem~\ref{thm_main2}, we partially answer
Keevash and Sudakov's question for the range $n \ge ck^{7/3}$, and
combine this result with tools developed in \cite{KeSu}. Therefore,
the main focus of our paper will be to prove the following theorem.

\begin{THM} \label{thm_main}
There exists a constant $c$ such that for every positive integer $k$,
every Hamiltonian
graph on $n \ge ck^{7/3}$ vertices with $\alpha(G) \le k$
contains a cycle of length $n-1$.
\end{THM}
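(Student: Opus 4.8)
The plan is to suppose that $G$ has no cycle of length $n-1$ and to derive a contradiction once $n\ge ck^{7/3}$. Fix a Hamilton cycle $C=v_1v_2\cdots v_n$, indices read modulo $n$, and for a vertex $u$ write $N^{*}(u)\subseteq\mathbb{Z}_n$ for the set of \emph{positions} $j$ with $uv_j\in E(G)$. The basic source of $(n-1)$-cycles is a rotation of the Hamilton path $v_{i+1}v_{i+2}\cdots v_{i-1}$ of $G-v_i$: if $v_{i-1}v_j\in E(G)$ and $v_{i+1}v_{j+1}\in E(G)$ for some interior position $j$, the corresponding rotation closes into a cycle on $V(G)\setminus\{v_i\}$. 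Our assumption therefore forbids every such configuration; in particular there is no ``length-two chord'' $v_{i-1}v_{i+1}$, and for every $i$ the sets $N^{*}(v_{i-1})$ and $N^{*}(v_{i+1})-1$ are disjoint apart from a bounded number of positions near $i$, so that $|N(v_{i-1})|+|N(v_{i+1})|\le n+O(1)$ and, on summing over $i$, $e(G)\le n^{2}/4+O(n)$. The first substantive step is to iterate this: running several rotations in a row produces an elaborate family of forbidden configurations constraining how the neighbourhoods of $v_{i-1}$, $v_{i+1}$, and the vertices met along the rotations may overlap --- a P\'osa-type toolkit, one instance being that the set $S_i$ of endpoints reachable from $v_{i+1}\cdots v_{i-1}$ by rotations fixing $v_{i+1}$ satisfies $v_{i+1}\notin N(S_i)$ and $|N(S_i)|\le 3|S_i|+O(1)$.

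The second ingredient is the independence hypothesis, applied through a degree threshold $t\approx k^{4/3}$. Since $\alpha(G)\le k$, the complement of $G$ is $K_{k+1}$-free, so $e(G)\ge n^{2}/(2k)-n/2$; in particular, if every vertex had degree at most $t$ then $e(G)\le tn/2$, forcing $n\le k(t+1)$, which contradicts $n\ge ck^{7/3}$ once $t\approx k^{4/3}$ and $c$ is large enough. Hence we may assume $G$ has a vertex $v$ with $|N(v)|>t$, and the plan is to exploit this large neighbourhood: the more neighbours $v$ has, the richer the rotations available when $v$ (or a rotated copy of the path incident to it) becomes an endpoint, and hence the more of the forbidden configurations above come into play; pushing this through, the no-$(n-1)$-cycle scenario should already be excluded when $n$ is of order $k^{3}/\sqrt{t}$, improving on the bound $n=O(k^{3})$ available with no degree information. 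With $t\approx k^{4/3}$ this is again $O(k^{7/3})$; more precisely, balancing the ``low-degree'' regime, which needs $n>kt$, against the ``high-degree'' regime, which needs $n\gtrsim k^{3}/\sqrt{t}$, forces $t\approx k^{4/3}$, and this is exactly where the exponent $7/3$ comes from.

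The hard part is the high-degree analysis: converting the rotation data around a high-degree vertex into either an outright contradiction or an independent set of more than $k$ vertices, while losing only the right power of $t$. The difficulty is that the reachable-endpoint sets $S_i$ are large but need not themselves be independent, so one cannot simply adjoin $v_{i+1}$ to $S_i$; one must instead extract large independent subsets from the $S_i$ --- using that $\alpha(G)\le k$ caps their clique structure --- or splice together fragments coming from several choices of $i$ without creating edges, and do this tightly enough to reach $k^{7/3}$ rather than the cruder $k^{3}$. A secondary, more routine, difficulty is the bookkeeping of degenerate positions in the rotations: chords that are too short, and rotations that would revisit the deleted vertex $v_i$.
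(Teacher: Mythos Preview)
Your write-up is a plan, not a proof, and the plan has a genuine gap at its core. The low-degree half is fine: if every vertex has degree at most $t$ then Tur\'an on the complement gives $n\le k(t+1)$, and with $t\approx k^{4/3}$ this is beaten by $n\ge ck^{7/3}$. But the high-degree half --- the assertion that once some vertex has degree exceeding $t$ the no-$(n-1)$-cycle hypothesis forces $n\lesssim k^{3}/\sqrt{t}$ --- is never argued. You say ``pushing this through'' and ``should already be excluded'', then explicitly flag the step as ``the hard part'' and describe only what one would \emph{like} to do (extract large independent subsets from the P\'osa endpoint sets $S_i$, splice fragments from several $i$). Nothing in the proposal explains where the factor $\sqrt{t}$ comes from, why a \emph{single} high-degree vertex gives enough leverage over the rotation tree, or how the independence bound $\alpha\le k$ interacts quantitatively with the size of the reachable-endpoint sets. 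Without that, the balancing calculation that produces $7/3$ is numerology: you have chosen $t$ so that two expressions match, but only one of the two expressions has been established.

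For comparison, the paper's argument is structurally quite different and does not use P\'osa rotations at all (beyond the one-step extension in Proposition~3.1). It cuts the Hamilton cycle into $\Theta(k^2)$ short intervals of high-degree vertices, takes from each an independent set $I_j'$ of odd-indexed vertices of size $\Theta(n/k^2)\ge\Theta(k^{1/3})$, and studies the chords between these sets via a three-colouring of an auxiliary graph on the intervals: ``red'' pairs immediately yield a contradicting cycle, ``blue'' pairs form a planar subgraph (so a $1/5$ fraction of intervals avoid them), and ``green'' pairs are covered by a single vertex. The last fact gives a structural lemma: a union of $p$ of the $I_j'$ contains an independent set of size $\sum|I_j'|-\binom{p}{2}$. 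The exponent $7/3$ then arises from a three-level construction (Lemmas~4.1--4.2): take $k^{1/3}$ vertices in one main set, assign each $k^{1/3}$ neighbouring intervals, and apply the structural lemma to the $k^{1/3}$ leftover sets of each to get disjoint independent sets of size $\Theta(k^{2/3})$, whose union exceeds $k$ and so contains an edge --- producing a ``semi-triangle'' of chords. A minimality argument on semi-triangles then forces two of them to overlap in a pattern that yields the contradicting cycle. None of this machinery appears in your outline, and your outline supplies nothing to replace it.
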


In Section \ref{section_thm2}, we state a slightly stronger form of
Theorem \ref{thm_main}, and use it to deduce
Theorem \ref{thm_main2}. Then in Sections \ref{section_structural}
and \ref{section_thm1}, we prove the strengthened version Theorem \ref{thm_main}.
To simplify the presentation, we often omit floor and ceiling signs whenever
these are not crucial and make no attempts to optimize absolute constants involved.

\section{Pancyclicity}
\label{section_thm2}

In order to prove Theorem \ref{thm_main2}, we use the following
slightly stronger form of Theorem \ref{thm_main} whose proof
will be given in the next two sections.

\begin{THM} \label{thm_mainstrong}
There exists a constant $c$ such that for every positive integer $k$,
every Hamiltonian graph on $n \ge ck^{7/3}$ with $\alpha(G) \le k$
contains a cycle of length $n-1$. Moreover, for an arbitrary fixed
set of vertices $W$ of size $|W| \le 20k^2$, we can find such a cycle
which contains all the vertices of $W$.
\end{THM}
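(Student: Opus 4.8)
We are given a Hamiltonian graph $G$ on $n \ge ck^{7/3}$ vertices with $\alpha(G) \le k$, and we must produce a cycle of length $n-1$ that moreover avoids exactly one vertex and contains all of a prescribed set $W$ with $|W| \le 20k^2$. Fix a Hamilton cycle $C = v_0 v_1 \cdots v_{n-1} v_0$. The natural strategy is to look for a vertex $v_i \notin W$ such that its two neighbours $v_{i-1}, v_{i+1}$ on $C$ are adjacent in $G$, or more generally such that the path $C - v_i$ can be closed up into a cycle of length $n-1$ by a single chord or a short rerouting. A vertex $v_i$ for which $v_{i-1}v_{i+1} \in E(G)$ gives the cycle immediately, so we may assume no such $v_i$ outside $W$ exists; then $\{v_{i-1} : v_{i-1}v_{i+1}\notin E, v_i\notin W\}$ together with structural consequences of $\alpha(G)\le k$ will force a very restrictive local picture around most of $C$.

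**Key steps.** First I would set up the standard rotation/exchange machinery: for a vertex $v_i$ we want to delete, consider segments of $C$ near $v_i$ and use that any set of $k+1$ vertices contains an edge to find chords that let us bypass $v_i$. Concretely, partition the indices into blocks and, using $\alpha(G)\le k$, argue that in all but at most $O(k)$ of the blocks there is a "good" configuration — a pair of nearby vertices on $C$ joined by a chord whose endpoints straddle a deletable vertex — which yields an $(n-1)$-cycle. Second, to handle $W$: since $|W| \le 20k^2$ and $n \ge ck^{7/3} \gg 20k^2$ for $c$ large, the set of "bad" indices we are forced to avoid (those near $W$, plus the $O(k)$ exceptional blocks) has size $o(n)$, so a deletable vertex can always be found outside $W$. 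This is exactly the point of stating the theorem with the extra $W$: the proof of the plain $(n-1)$-cycle statement already localizes the choice of deleted vertex to an overwhelming majority of $C$, and one simply checks that the flexibility survives forbidding a polynomially-small set. Third, I would reduce to the "strengthened version of Theorem~\ref{thm_main}" promised in the excerpt — i.e., the real content is an $(n-1)$-cycle lemma with a controlled, large set of admissible deleted vertices, and the $W$-clause is a corollary of that control.

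**Main obstacle.** The hard part is not the $W$-bookkeeping but establishing, from $\alpha(G)\le k$ alone at the threshold $n = \Theta(k^{7/3})$, that the Hamilton cycle admits an $(n-1)$-closure from a linear-sized (or at least $\omega(k^2)$-sized) family of starting points. The independence number gives edges inside every $(k+1)$-set, but turning these edges into a rerouting that changes the cycle length by exactly one, rather than by an uncontrolled amount, requires a careful case analysis of how chords can be chained — essentially the structural analysis of Sections~\ref{section_structural} and~\ref{section_thm1}. I expect the proof to isolate a small number of obstruction configurations (clusters of cliques glued along $C$, as in the extremal example in the introduction), show each obstruction can occupy only $O(k^{4/3})$ or so of the cycle, and conclude that the remaining $n - O(k^2)$ vertices are all deletable; forbidding $W$ then costs only another $20k^2 = o(n)$ vertices. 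Thus the moreover-statement will follow with no extra ideas once the quantitative $(n-1)$-cycle result is proved with the admissible set made explicit.
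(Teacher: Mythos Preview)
Your proposal is not a proof; it is a hopeful outline that misses all of the mechanisms the argument actually needs, and the framework you set up points in the wrong direction.

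The paper does \emph{not} try to locate a single vertex $v_i\notin W$ that can be bypassed by a local rerouting of the Hamilton cycle. Instead it argues by contradiction: assume no $(n-1)$-cycle exists, and show that then $G$ must contain a \emph{contradicting cycle}, namely a cycle of length $n-i$ for some $1\le i\le 12k$, containing $W$, and missing only vertices of degree $\ge 13k$. An Erd\H{o}s-style extension (Proposition~\ref{prop_extension}) shows any such cycle can be grown to length $n-1$, giving the contradiction. So the target is never ``change the length by exactly one''; it is ``change the length by something between $1$ and $12k$ while missing only high-degree non-$W$ vertices.'' Your insistence on an exact $(n-1)$-closure from a large explicit set of deletable vertices is a harder and unnecessary goal, and nothing in your sketch suggests how to achieve it at the $k^{7/3}$ threshold.

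The real work lies in two structural steps you do not anticipate. First, one partitions the Hamilton cycle into about $40k^2$ intervals, discards those meeting $W$ or a low-degree vertex (this is where the $W$-clause is absorbed---not by avoiding $W$ among ``deletable vertices''), and inside each surviving interval takes an independent set $I_j'$ of odd-indexed vertices. One then builds an auxiliary graph $H$ on the intervals, colouring an edge red, blue, or green according to how a pair of $G$-chords between the two intervals cross. Red edges and crossing blue edges each yield a contradicting cycle directly; hence the blue graph is planar, and after $5$-colouring one passes to a blue-free subfamily on which any two intervals share at most a one-vertex cover of their cross-edges (Lemma~\ref{lemma_structural}). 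Second, on this subfamily one splits each $I_j'$ into a main set and a leftover set, defines \emph{semi-triangles} of Types~A and~B (specific patterns of three chords among three intervals), shows Type~B gives a contradicting cycle, and uses Lemma~\ref{lemma_structural} to force many Type~A semi-triangles. Taking a Type~A semi-triangle of minimum ``length'' and applying pigeonhole produces two overlapping Type~A semi-triangles whose union again yields a contradicting cycle. None of this---the colour classes, the planarity step, the semi-triangle minimality argument---appears in your plan, and your speculative ``obstructions occupy $O(k^{4/3})$'' has no counterpart in the actual proof.
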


As mentioned in the Introduction, Theorem \ref{thm_mainstrong} will be
used to find cycles of linear lengths.
The following two results from \cite[Theorem 1.3 and Lemma 3.2]{KeSu} allows us to
find cycle lengths in the range not covered by Theorem \ref{thm_mainstrong}.
\begin{THM} \label{thm_smallcycle}
If $G$ is a graph with $\delta(G) \ge 300 \alpha(G)$ then $G$ contains a cycle of
length $\ell$ for all $3 \le \ell \le \delta(G)/81$.
\end{THM}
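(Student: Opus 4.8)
Write $\delta=\delta(G)$ and $\alpha=\alpha(G)$, and assume $G$ is connected (otherwise pass to a component, which does not decrease the minimum degree). The plan is to deduce the statement from the theorem of Keevash and Sudakov \cite{KeSu} quoted in the Introduction, that for some absolute constant $c_0$ every graph $F$ with $\kappa(F)\ge c_0\,\alpha(F)$ is pancyclic. If $G$ itself is $(c_0\alpha)$-connected then, since $\alpha(G)\le\alpha$ and $n\ge\delta+1>\delta/81$, it is already pancyclic and we are done at once. So the real task is to locate inside an arbitrary such $G$ a subgraph $H$ that is simultaneously highly connected, $\kappa(H)\ge c_0\,\alpha(H)$, and not too small, $|V(H)|\ge\delta/81$; applying the theorem to $H$ then finishes the proof, because $H$ is pancyclic and its cycles are cycles of $G$.

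To produce $H$ I would peel the graph. Put $G_0=G$; if $G_i$ is $(c_0\alpha)$-connected, stop and set $H=G_i$, and otherwise choose a minimum separator $S_i$ of $G_i$, so that $|S_i|=\kappa(G_i)<c_0\alpha$, and let $G_{i+1}$ be the subgraph induced on a largest component $A_i$ of $G_i-S_i$. The relevant bookkeeping is the following. One has $\alpha(G_{i+1})\le\alpha(G_i)\le\alpha$ throughout, so the final $H$ automatically satisfies $\kappa(H)\ge c_0\alpha\ge c_0\,\alpha(H)$, and $|V(H)|\ge\delta(H)+1$. Every vertex of $A_i$ keeps all but at most $|S_i|<c_0\alpha$ of its neighbours, so $\delta(G_{i+1})>\delta(G_i)-c_0\alpha$ and, for the same reason, $|A_i|\ge\delta(G_i)-|S_i|+1$. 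Finally, a transversal of the components of $G_i-S_i$ is an independent set, so there are at most $\alpha$ of them and passing to the largest loses only a factor $\alpha$ in the vertex count. After $r$ peeling steps this gives $\delta(H)>\delta-r\,c_0\alpha$, and the crux is to control $r$.

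Bounding the number of peeling steps is where the explicit constants $300$ and $81$ enter, and it is the main obstacle. One needs $c_0$ to be an absolute constant small relative to $300$, and one must show that the separators $S_i$ cannot consume more than a fraction $80/81$ of the minimum degree before a $(c_0\alpha)$-connected subgraph is reached; here the estimate $|A_i|\ge\delta(G_i)-|S_i|+1$ helps, since when a separator is short the surviving component is already almost as large as $\delta(G_i)$, so the process is forced either to terminate quickly at a near-complete, hence highly connected, piece, or to lose correspondingly little at that step. Making this accounting close, using $\delta\ge300\alpha$, is the heart of the argument. As a check, and as an independent route to the short cycle lengths, observe that for any vertex $v$ the graph $G[N(v)]$ has at least $\delta$ vertices and independence number at most $\alpha$, so by the Gallai-Milgram theorem it contains a path on at least $\delta/\alpha$ vertices; together with $v$, which is adjacent to all of them, this yields cycles through $v$ of every length from $3$ to $\delta/\alpha+1$. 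In particular this settles the case $\alpha\le 80$ outright, and in general it produces all the lengths $\ell\le\delta/\alpha$ with no reference to connectivity whatsoever.
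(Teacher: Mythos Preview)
The paper does not prove this statement at all: Theorem~\ref{thm_smallcycle} is quoted verbatim from Keevash--Sudakov \cite{KeSu} (it is their Theorem~1.3), and is used here purely as a black box. So there is no ``paper's own proof'' to compare against.

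That said, your proposed route has two real problems. First, it is almost certainly circular. You want to deduce the short-cycle theorem from the other Keevash--Sudakov result mentioned in the Introduction, namely that $\kappa(F)\ge c_0\,\alpha(F)$ implies $F$ is pancyclic. But both statements come from the \emph{same} paper \cite{KeSu}, and the pancyclicity theorem there is proved by handling short cycle lengths via exactly the statement you are trying to establish; invoking it would assume what you need to show. You also need $c_0$ to be small relative to $300$ for your constants to close, yet the Introduction only asserts the existence of \emph{some} absolute constant, with no indication of its size. Second, even granting a non-circular black box with a favourable constant, your peeling argument is genuinely incomplete at the point you yourself flag as ``the main obstacle'': you give no bound on the number $r$ of peeling steps. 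The inequality $\delta(G_{i+1})>\delta(G_i)-c_0\alpha$ only yields $r<\delta/(c_0\alpha)$ from $\delta(G_r)>0$, and nothing you wrote forces termination before $\delta(H)$ has dropped below $\delta/81$; note that $n=|V(G_0)|$ is not bounded in terms of $\delta$ and $\alpha$, so ``each step discards many vertices'' does not by itself cap $r$. Your Gallai--Milgram observation is correct and pleasant, but it only produces lengths up to $\delta/\alpha+1$, which for $\alpha>81$ falls short of the required $\delta/81$, so it cannot replace the missing step.
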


\begin{LEMMA} \label{lemma_medcycle}
Suppose $G$ is a graph with independence number $\alpha(G) \le k$ and $V(G)$ is
partitioned into two parts $A$ and $B$ such that
\begin{enumerate}[(i)]
  \setlength{\itemsep}{1pt}
  \setlength{\parskip}{0pt}
  \setlength{\parsep}{0pt}
\item $G[A]$ is Hamiltonian.
\item $|B| \ge 9k^2 + k + 1$, and
\item every vertex in $B$ has at least 2 neighbors in $A$.
\end{enumerate}
Then $G$ contains a cycle of length $\ell$ for all $2k+1 + \left\lfloor \log_2(2k+1) \right\rfloor \le \ell \le |A|/2$.
\end{LEMMA}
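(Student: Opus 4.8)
Here is the plan I would follow.

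Write $m=|A|$ and fix a Hamilton cycle $C=v_1v_2\cdots v_mv_1$ of $G[A]$; we may assume $m\ge 4k+2+2\lfloor\log_2(2k+1)\rfloor$, since otherwise the target range of lengths is empty. The basic device is a ``bypass'': if $b\in B$ is adjacent to $v_i$ and $v_j$, then following either arc of $C$ between $v_i$ and $v_j$ and closing up through $b$ produces a cycle, and a family of vertices of $B$ whose two chosen neighbours cut out pairwise disjoint arcs of $C$ may be used simultaneously. The resulting cycle has length $m$ minus the total number of vertices of $C$ removed by the active bypasses, so the task reduces to showing that for every $\ell$ with $2k+1+\lfloor\log_2(2k+1)\rfloor\le \ell\le m/2$ one can choose bypasses removing exactly $m-\ell$ vertices in total.

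I would handle a length $\ell\ge \ell_0$, for a threshold $\ell_0=\Theta(k)$, by a single bypass: it is enough to find $b\in B$ with two neighbours on $C$ at arc-distance exactly $\ell-2$ along one of the two arcs. This is the step where all three hypotheses must be combined, and I expect it to be the main obstacle. Since every vertex of $B$ has at least two neighbours on $C$, there are more than $18k^2$ edges between $B$ and $C$; the idea is to show that if a whole block of $\lfloor\log_2(2k+1)\rfloor$ consecutive arc-distances in the relevant range were unrealised — only a full block being fatal, since a detour $v_ibv_j$ can always be lengthened by rerouting it through a few extra vertices of $B$, the logarithmic slack recording exactly how much coarseness this leaves — then the vertices of $C$ and of $B$ witnessing this could be assembled into an independent set of more than $k$ vertices, contradicting $\alpha(G)\le k$. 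Making this extraction run, and in particular seeing why the bound $|B|\ge 9k^2+k+1$ is precisely what makes it work, is the technical heart.

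For $\ell<\ell_0$ the Hamilton cycle is of no use and I would instead find the cycle essentially inside $B$: since $\alpha(G[B])\le k$, the Gallai--Milgram theorem covers $B$ by at most $k$ vertex-disjoint paths, so $G[B]$ contains a path on at least $|B|/k\ge 9k$ vertices, and a short self-contained argument — using $\alpha(G[B])\le k$ once more to produce chords of this path spanning long sub-intervals, and combining nested chords to tune the length — yields cycles of every length up to $\Omega(k)$, in particular every length in $[2k+1+\lfloor\log_2(2k+1)\rfloor,\ell_0]$ as long as $\ell_0$ is not taken too large. One finally checks that $\ell_0$ can be chosen both small enough for the second regime to reach up to it and large enough for the first regime to reach down to it, so the two ranges cover the whole window. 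The genuinely delicate point is the one in the previous paragraph: turning ``$\alpha(G)\le k$ together with $B$ large'' into the assertion that no admissible cycle length is skipped, with $\lfloor\log_2(2k+1)\rfloor$ absorbing the gaps that cannot otherwise be closed.
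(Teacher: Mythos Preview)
The paper does not prove this lemma: it is quoted verbatim from Keevash and Sudakov \cite[Lemma~3.2]{KeSu}, so there is no in-paper argument against which to compare your sketch. What I can do is assess whether the plan you outline would actually carry through.

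There is a genuine gap in your treatment of the ``long'' regime. You propose to obtain each length $\ell\ge \ell_0$ from a \emph{single} bypass $v_ibv_j$, possibly padded by rerouting through $O(\log k)$ further vertices of $B$, and you assert that if some block of $\lfloor\log_2(2k+1)\rfloor$ consecutive arc-distances were entirely missed one could extract an independent set of size exceeding $k$. But a counting argument shows this cannot be right as stated. The hypothesis gives only $|B|\ge 9k^2+k+1$, and each vertex of $B$ contributes (via its two guaranteed neighbours on $C$) a single short arc-distance; even after your logarithmic padding, the collection of reachable lengths has size at most $|B|\cdot O(\log k)=O(k^2\log k)$. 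Meanwhile $|A|$ is unconstrained from above --- in the paper's own application $|A|$ is of order $k^{7/3}$ --- so the target interval $[\ell_0,\,|A|/2]$ can have far more than $k^2\log k$ elements. No independence argument inside $B\cup C$ alone can force the arc-distances to be $O(\log k)$-dense in such a long interval; they simply are not numerous enough. Something essential is missing, and the natural candidate is the structure of $G[A]$ itself: since $\alpha(G[A])\le k$, the Hamilton cycle on $A$ carries a rich supply of chords (any $k+1$ alternate vertices on $C$ span an edge), and it is these chords --- not merely the $B$-to-$A$ edges --- that let one adjust the arc length continuously. Your sketch never invokes them.

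The short regime is also thinner than it looks. You are right that $G[B]$ contains a path on at least $9k$ vertices, but ``a short self-contained argument\ldots yields cycles of every length up to $\Omega(k)$'' is precisely the hard part: a path of length $9k$ with independence number $\le k$ certainly has long chords, but turning nested chords into \emph{every} integer length in the window $[2k+1+\lfloor\log_2(2k+1)\rfloor,\ell_0]$ --- with the specific lower endpoint the lemma demands --- is not automatic and is where the $\log_2(2k+1)$ term actually originates. As written, both halves of the plan defer the real work to unspecified sublemmas, and the first half rests on a density claim that is false by counting.
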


\begin{pfof}{Theorem \ref{thm_main2}}
Note that the conclusion is immediate if $k=1$. Thus we may assume
that $k \ge 2$. Let $c$ be the maximum of the constant coming from
Theorem \ref{thm_mainstrong} and 300 and $G$ be Hamiltonian graph on
$n=3ck^{7/3}$ vertices such that $\alpha(G) \le k$. By repeatedly
applying Theorem \ref{thm_mainstrong} with $W=\emptyset$, we can
find cycles of length $ck^{7/3}$ to $3ck^{7/3}$.

Moreover, as we will see,
by carefully using Theorem \ref{thm_mainstrong} in the previous step,
we can prepare a setup for applying Lemma \ref{lemma_medcycle}.
Let $C_1$ be the cycle of length $n-1$ obtained by Theorem \ref{thm_mainstrong},
and let $v_1$ be the vertex not contained in $C_1$.
We know that $v_1$ has at least 2 neighbors in $C_1$. Let $W_1$ be arbitrary two
vertices out of them. By applying Theorem \ref{thm_mainstrong} with $W=W_1$,
we can find a cycle $C_2$ of length $n-2$ which contains $W_1$. Let $v_2$
be the vertex contained in $C_1$ but not in $C_2$, and let $W_2$ be the
union of $W_1$ and arbitrary two neighbors of $v_2$ in $C_2$. We can repeat it
$10k^2$ times (note that we maintain $|W| \le 20k^2$),
to obtain a cycle $C_{10k^2}$ of length $n-10k^2$, and vertices $v_1, \cdots, v_{10k^2}$
so that each $v_i$ has at least 2 neighbors in the cycle $C_{10k^2}$. Since
$10k^2 \ge 9k^2 + k + 1$, by Lemma \ref{lemma_medcycle}, $G$ contains a cycle of
length $\ell$ for all $2k+1 + \left\lfloor \log_2(2k+1) \right\rfloor \le \ell \le (n-10k^2)/2$.

Now we find all the remaining cycle lengths. From the graph $G$,
pick one by one, a vertex of degree less than $ck^{4/3}$, and
remove it together with its neighbors. Note
that since the picked vertices form an independent set in $G$, at
most $k$ vertices will be removed. Therefore, when there are no more
vertices to pick, at least $3ck^{7/3} - k\cdot (ck^{4/3}+1) >
ck^{7/3}$ vertices remain, and the induced subgraph of $G$ on these
vertices will be of minimum degree at least $ck^{4/3}$. Since
$ck^{4/3} \ge 300 k \ge 300 \alpha(G)$, by Theorem
\ref{thm_smallcycle}, $G$ contains a cycle of length $\ell$ for all
$3 \le \ell \le (c/81)k^{4/3}$.

By noticing the inequalities $(n-10k^2)/2 = (3ck^{7/3} - 10k^2)/2 \ge ck^{7/3}$ and $(c/81)k^{4/3} \ge 2k+1 + \left\lfloor \log_2(2k+1) \right\rfloor$ we can see the existence of cycles of all possible lengths.
\end{pfof}

\section{A structural lemma}
\label{section_structural}

In Sections \ref{section_structural} and \ref{section_thm1},
we will prove Theorem \ref{thm_mainstrong}. Given a Hamiltonian graph
on $n$ vertices, one can easily see that there are many ways one
can find a cycle of length $n-1$, if certain `chords' are
present in the graph. Our strategy is to find such chords that are `nicely' arranged.
In particular, in this section, we consider pairs of chords and the way they cross
each other in order to deduce some
structure of our graph. Then in the next section, we prove the
main theorem by considering certain triples of chords, which we call
semi-triangles.

Throughout this section, let $G$ be a fixed graph on $n \ge 80k^2$
vertices such that $\alpha(G) \le k$, and let $W$ be a fixed set of
vertices such that $|W| \le 20k^2$. Note that the bound on the
number of vertices is weaker than that of Theorem
\ref{thm_mainstrong}. The results developed in this section still
holds under this weaker bound, and we only need the stronger bound
$n \ge ck^{7/3}$ in the next section.
Since our goal is to prove the existence of a cycle of length $n-1$,
assume to the contrary that $G$ does not contain a cycle of length
$n-1$. Under these assumptions, we will prove a structural lemma on
the graph $G$ which will immediately imply a slightly weaker form of
Theorem \ref{thm_mainstrong} where the bound on the number of
vertices is replaced by $\Omega(k^{5/2})$. In the next section, we
will apply this structural lemma more carefully to prove Theorem
\ref{thm_mainstrong}.

One of the main ingredients of the proof is the following
proposition proved by Erd\H{o}s \cite{Erdos}, whose idea has its
origin in \cite{ChEr}.

\begin{PROP} \label{prop_extension}
For all $1 \le i \le 12k$, $G$ does not have a cycle of length $n-i$
containing $W$ for which all the vertices not in this cycle have
degree at least $13k$.
\end{PROP}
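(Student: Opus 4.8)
The plan is to grow the cycle one vertex at a time, in the spirit of the Chv\'atal--Erd\H{o}s rotation. Note first that the case $i=1$ of the statement is exactly the standing assumption of this section (no cycle of length $n-1$), so it suffices to rule out, for each fixed $i$ with $2\le i\le 12k$, a cycle $C$ of length $n-i$ containing $W$ whose complement $R:=V(G)\setminus C$ (which has size $i$) consists entirely of vertices of degree at least $13k$. Assuming such a $C$ exists, I will transform it into a cycle of length $n-(i-1)$ that again contains $W$ and again has all vertices outside it of degree at least $13k$; iterating this at most $i-1$ times produces a cycle of length $n-1$, the desired contradiction. A useful observation that makes the iteration painless: every transformation below only \emph{inserts} a vertex into the current cycle and never deletes one, so ``contains $W$'' is preserved automatically, and the set of outside vertices only shrinks, so the degree hypothesis is preserved as well. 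Hence it is enough to describe one growth step.

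For the growth step I would fix any $u\in R$. Since $u$ has at most $|R|-1=i-1\le 12k-1$ neighbours inside $R$ and at least $13k$ neighbours in $G$, it has at least $13k-(i-1)\ge k+1$ neighbours on $C$. If two of these neighbours are consecutive along $C$, then inserting $u$ between them yields a cycle of length $n-i+1$ and we are done. Otherwise the neighbours $v_{a_1},\dots,v_{a_t}$ of $u$ on $C$ (with $t\ge k+1$, listed in cyclic order) are pairwise non-consecutive; let $w_j$ be the $C$-successor of $v_{a_j}$. Then the $w_j$ are $t$ distinct vertices, none equal to $u$, and none adjacent to $u$ (an adjacency $u\sim w_j$ would make $v_{a_j},w_j$ two consecutive neighbours of $u$). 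Thus $\{u,w_1,\dots,w_t\}$ has $t+1\ge k+2$ vertices and, since $\alpha(G)\le k$, is not independent; as $u$ is adjacent to none of the $w_j$, some $w_aw_b$ with $a\ne b$ is an edge of $G$.

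The final move is to splice $u$ into $C$ using the three chords $uv_a$, $uv_b$, $w_aw_b$: traverse $u$, then $v_a$, then $C$ backwards from $v_a$ around to $w_b$, then the chord $w_bw_a$, then $C$ forwards from $w_a$ to $v_b$, then back to $u$. The non-consecutiveness of $v_a$ and $v_b$ among the neighbours of $u$ guarantees that the two arcs of $C$ used here are genuine, disjoint subpaths whose union is all of $V(C)$, so this closed walk is in fact a cycle through $V(C)\cup\{u\}$, of length $n-i+1=n-(i-1)$; by the observation above it still contains $W$ and still has all outside vertices of degree at least $13k$, completing the step. I expect the only genuine bookkeeping — hence the ``hard'' part, although it is entirely elementary — to be exactly this last rerouting: checking that, with the arcs oriented correctly, the three chords produce a single cycle that covers every vertex of $C$ together with $u$. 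The one inequality to keep an eye on is $t\ge k+1$, i.e.\ $13k-(i-1)\ge k+1$, which is precisely where the threshold $13k$ and the range $i\le 12k$ enter.
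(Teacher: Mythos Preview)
Your proof is correct and follows essentially the same Chv\'atal--Erd\H{o}s extension argument as the paper: pick an outside vertex, use the degree bound versus $|R|$ to get at least $k+1$ neighbours on the cycle, and then use $\alpha(G)\le k$ on their successors (the paper uses predecessors) to find a chord that lets you splice the vertex in. Your write-up is in fact a bit more careful than the paper's about the count $13k-(i-1)\ge k+1$ and about why the rerouted walk is a genuine cycle covering all of $V(C)\cup\{u\}$.
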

\begin{proof}
Assume that $C$ is the vertex set of
a cycle given as above and let $X = V(G) \setminus C$.
We will show that there exists a cycle of length $|C|+1$ which contains $C$.
By repeatedly applying the same argument, we can show the existence
of a cycle of length $n-1$. Since this contradicts our hypothesis,
we can conclude that $G$ cannot contain a cycle as above.

Consider a vertex $x \in X$. Since $|X| \le 12k$ and $\deg(x) \ge
13k$, we know that the number of neighbors of $x$ in $C$ is at least
$k$. Without loss of generality, let $C = \{ 1,2,\cdots,n-i \}$, and
assume that the vertices are labeled in the order in which
they appear on the cycle. Let $w_1,
\cdots, w_k$ be distinct neighbors of $x$ in $C$. Then since $G$ has
independence number less than $k$, there exists two vertices $w_i -
1, w_j - 1$ which are adjacent (subtraction is modulo $n-i$). Then
$G$ contains a cycle $x, w_i, w_i + 1, \cdots, w_j - 1, w_i - 1, w_i
- 2, \cdots, w_j, x$ of length $n-i+1$.
\end{proof}

\noindent In view of Proposition \ref{prop_extension}, we make the
following definition.

\begin{DEF}
Let a {\em contradicting cycle} be a cycle containing $W$, of length
$n-i$ for some $1 \le i \le 12k$, for which all the vertices not in
this cycle have degree at least $13k$.
\end{DEF}

Thus Proposition \ref{prop_extension} is equivalent to saying that
$G$ does not contain a contradicting cycle (under the assumption that
$G$ does not contain a cycle of length $n-1$). By considering several
cases, we will show that there always exists a contradicting cycle,
from which we can deduce a contradiction on our assumption that
there is no cycle of length $n-1$ in $G$. The next simple
proposition will provide a set-up for this argument.

\begin{PROP} \label{prop_fewsmalldegree}
$G$ contains at most $13k^2$ vertices of degree less than $13k$.
\end{PROP}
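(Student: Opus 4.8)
The plan is to extract a large independent set from the set of low-degree vertices and then invoke the bound $\alpha(G) \le k$. Let $S$ be the set of vertices of $G$ of degree less than $13k$, and suppose for contradiction that $|S| > 13k^2$. I would build an independent set inside $S$ greedily: put $S_0 = S$, and as long as $S_{j-1}$ is nonempty, pick any vertex $v_j \in S_{j-1}$ and set $S_j = S_{j-1} \setminus \left(\{v_j\} \cup N(v_j)\right)$. Since $\deg(v_j) < 13k$, the closed neighborhood $\{v_j\} \cup N(v_j)$ has at most $13k$ vertices, so $|S_j| \ge |S_{j-1}| - 13k$, and hence $|S_j| \ge |S| - 13kj$. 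As $|S| > 13k^2 = 13k \cdot k$, we get $|S_k| > 0$, so the process survives at least $k+1$ steps and yields vertices $v_1, \dots, v_{k+1}$.

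By construction $v_j \in S_{j-1}$ avoids $N(v_i)$ for every $i < j$, so there is no edge between any two of the $v_i$; that is, $\{v_1, \dots, v_{k+1}\}$ is an independent set of size $k+1$, contradicting $\alpha(G) \le k$. Therefore $|S| \le 13k^2$, as claimed. The argument is entirely routine — a standard greedy independent-set extraction — and the only point needing (minimal) attention is the bookkeeping that deleting a vertex of degree less than $13k$ together with its neighbors removes at most $13k$ vertices, so I do not expect any real obstacle here.
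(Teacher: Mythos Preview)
Your proof is correct and is essentially the same greedy argument as in the paper: assume more than $13k^2$ low-degree vertices, repeatedly pick one and delete its closed neighborhood (at most $13k$ vertices each time), and obtain an independent set of size $k+1$, contradicting $\alpha(G)\le k$. The only cosmetic difference is that the paper phrases the count via $\lceil (13k^2+1)/13k\rceil = k+1$ rather than tracking $|S_j|$ explicitly.
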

\begin{proof}
Assume that there exists a set $U$ of at least $13k^2+1$ vertices of
degree less than $13k$, and let $G' \subset G$ be the subgraph of $G$
induced by $U$. Take a vertex of $G'$ of degree less than $13k$,
remove it and all its neighbors from $G'$, and repeat the process.
This produces an independent set of size at least $\lceil
(13k^2+1) / 13k \rceil = k+1$ which is a contradiction.
\end{proof}

Assume that we are given a Hamilton cycle of $G$. Place the vertices
of $G$ on a circle in the plane according to the order they appear
in the Hamilton cycle and label the vertices by elements in $[n]$
accordingly. Consider the $40k^2$ intervals $[1+(i-1)\lfloor
\frac{n}{40k^2} \rfloor, i \lfloor \frac{n}{40k^2} \rfloor]$ for $i=1, \cdots,
40k^2$ consisting of consecutive vertices on the cycle. Take the
intervals which only consist of vertices not in $W$ of degree at
least $13k$. Let $t$ be the number of such intervals and let $I_1,
I_2, \cdots, I_t$ be these intervals (see Figure
\ref{fig:figstructural}). By Proposition \ref{prop_fewsmalldegree},
the number of intervals which contain a vertex from $W$ or of degree
less than $13k$ is at most $|W| + 13k^2$, and therefore
\[ t \ge 40k^2 - 13k^2 - |W| \ge 7k^2. \]

\begin{figure}[t]
  \centering
  \begin{tabular}{ccc}

\begin{tikzpicture}
  \foreach \a / \r / \j in {
    30/29/I_3, 
    50/29/I_2,
    70/29/I_1, 
    120/29/I_t, 
    -30/29/I_4
  }
  {
    \draw (\a-10:24mm) -- (\a-10:26mm);
    \draw (\a+10:24mm) -- (\a+10:26mm);
    \draw (\a:\r mm) node {{$\j$}};
    \draw (\a-10:26mm) arc (\a-10:\a+10:26mm);
  }

  \draw [thick] (0:25mm) arc (0:360:25mm);

\end{tikzpicture} & \hspace{0.5in} & 

\begin{tikzpicture}
  \draw [thick] (0:25mm) arc (0:360:25mm);

  \foreach \a / \b / \t / \c in {
    20/140/Red/20, 200/320/Blue/20
  }
  {
    \draw (\a-15:24mm) -- (\a-15:26mm);
    \draw (\a+15:24mm) -- (\a+15:26mm);
    \draw (\a-15:26mm) arc (\a-15:\a+15:26mm);

    \draw (\b-15:24mm) -- (\b-15:26mm);
    \draw (\b+15:24mm) -- (\b+15:26mm);
    \draw (\b-15:26mm) arc (\b-15:\b+15:26mm);

    \draw (\a+60:\c mm) node {{\t}};    
  }

		\draw [fill=black] (10:25mm) circle (0.5mm);
		\draw [fill=black] (30:25mm) circle (0.5mm);
    \draw (10:29 mm) node {$x_2$};    
    \draw (30:29 mm) node {$x_1$};    
		
		\draw [fill=black] (130:25mm) circle (0.5mm);
		\draw [fill=black] (150:25mm) circle (0.5mm);
    \draw (130:29 mm) node {$y_2$};    
    \draw (150:29 mm) node {$y_1$};    

    \draw (10:25mm) -- (130:25mm);
    \draw (30:25mm) -- (150:25mm);

		\draw [fill=black] (190:25mm) circle (0.5mm);
		\draw [fill=black] (210:25mm) circle (0.5mm);
    \draw (190:29 mm) node {$y_2$};    
    \draw (210:29 mm) node {$y_1$};    
		
		\draw [fill=black] (310:25mm) circle (0.5mm);
		\draw [fill=black] (330:25mm) circle (0.5mm);
    \draw (310:29 mm) node {$x_2$};    
    \draw (330:29 mm) node {$x_1$};    

    \draw (190:25mm) -- (330:25mm);
    \draw (210:25mm) -- (310:25mm);

\end{tikzpicture}
  \end{tabular}
  \caption{Intervals $I_i$, and the type of edges in the graph $H$.}
  \label{fig:figstructural}
\end{figure}
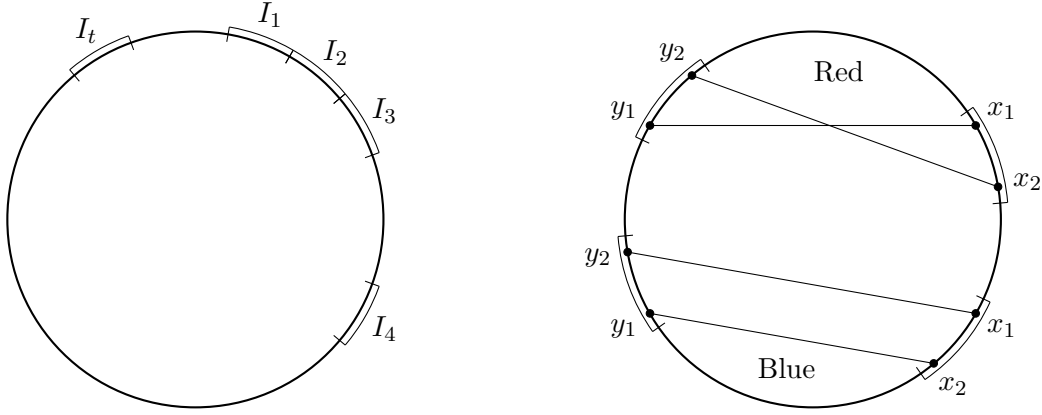

For each interval $I_j$, let $I_j'$ be the
set of first at most $k+1$ odd vertices in it (thus $I_j'$
is the set of all odd vertices in $I_j$ if $|I_j| \le 2(k+1)$).
If there exists an edge inside $I_j'$ then since
$I_j'$ lies in an interval of length at most $2k+2$, we can find a
contradicting cycle. Therefore $I_j'$ is an independent set
of size at least $\min\{k+1, \lfloor \frac{n}{80k^2} \rfloor \}$. However,
since the independence number of the graph is at most $k$, the
first case $|I_j'|=k+1$ gives us a contradiction. Therefore, we
may assume that $|I_j'| \le k$, and thus $I_j'$ lies in an interval
of length at most $2k$.

Consider an auxiliary graph $H$ on the vertex set $[t]$ so that
$i,j$ are adjacent if and only if there exists an edge between
$I_i'$ and $I_j'$. Furthermore, color the edges of $H$ into three
colors according to the following rule (see Figure
\ref{fig:figstructural}).
\begin{enumerate}[(i)]
  \setlength{\itemsep}{1pt} \setlength{\parskip}{0pt}
  \setlength{\parsep}{0pt}
\item Red if there exists $x_1, x_2 \in I_i', y_1, y_2 \in I_j'$ such that $x_1 < x_2$, $y_1 < y_2$ and $x_1$ is adjacent to $y_1$, and $x_2$ is adjacent to $y_2$.
\item Blue if not colored red, and there exists $x_1, x_2 \in I_i', y_1, y_2 \in I_j'$ such that $x_1 < x_2$, $y_1 < y_2$ and $x_1$ is adjacent to $y_2$, and $x_2$ is adjacent to $y_1$.
\item Green if not colored red nor blue.
\end{enumerate}
A red edge in the graph $H$ will give a cycle $x_1-y_1-x_2-y_2-x_1$,
see Figure \ref{fig:figstructural2}. The length of the cycle is at
least $n- 4k$ since each $I_i'$ lies in an interval of length at
most $2k$, and is at most $n-2$ since there always exist vertices
between $x_1, x_2$ and between $y_1, y_2$. Moreover, the cycle
contains the set $W$ since $W$ does not intersect the intervals
$I_i$. Therefore it is a contradicting cycle. Thus we may assume
that there does not exist red edges in $H$.

Consider the following drawing of the subgraph of $H$ induced by the
blue edges. First place all the vertices of the graph $G$ on the
cycle along the given order. A vertex of $H$, which corresponds to
an interval $I_i$, will be placed on the circle in the middle of the
interval $I_i$. Draw a straight line between $I_i$ and $I_j$ if
there is a blue edge. Assume that there exists a crossing in this
drawing. Then this gives a situation as in Figure
\ref{fig:figstructural2} which gives the cycle
$x_1-y_2-x_3-y_4-y_1-x_2-y_3-x_4-x_1$. This cycle has length at
least $n - 4\cdot 2k \ge n - 8k$ and at most $n - 4$, hence is a
contradicting cycle.

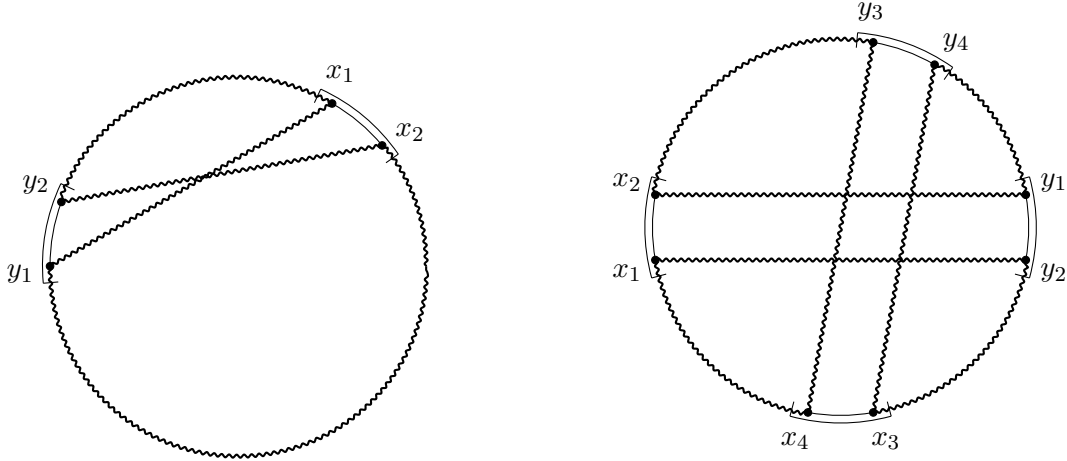
\begin{figure}[t]
  \centering
  \begin{tabular}{ccc}

\begin{tikzpicture}
  \foreach \a/\j/\k in {
    50/x_2/x_1, 170/y_2/y_1
  }
  {
    \draw (\a-15:24mm) -- (\a-15:26mm);
    \draw (\a-12:29mm) node {{$\j$}};
    \draw (\a+15:24mm) -- (\a+15:26mm);
    \draw (\a+12:29mm) node {{$\k$}};
    \draw (\a-15:26mm) arc (\a-15:\a+15:26mm);
  }

        \draw [fill=black] (40:25mm) circle (0.5mm);
        \draw [fill=black] (60:25mm) circle (0.5mm);

        \draw [fill=black] (160:25mm) circle (0.5mm);
        \draw [fill=black] (180:25mm) circle (0.5mm);

    \draw [thick, decorate, decoration={snake, amplitude=.2mm, segment length=1mm}] (40:25mm) -- (160:25mm);
    \draw [thick, decorate, decoration={snake, amplitude=.2mm, segment length=1mm}] (60:25mm) -- (180:25mm);

  \draw (40:25mm) arc (40:60:25mm);
  \draw (160:25mm) arc (160:180:25mm);
  \draw [thick, decorate, decoration={snake, amplitude=.2mm, segment length=1mm}] (180:25mm) arc (180:360:25mm);
  \draw [thick, decorate, decoration={snake, amplitude=.2mm, segment length=1mm}] (0:25mm) arc (0:40:25mm);
  \draw [thick, decorate, decoration={snake, amplitude=.2mm, segment length=1mm}] (60:25mm) arc (60:160:25mm);

\end{tikzpicture} & \hspace{0.5in} & 

\begin{tikzpicture}
  \foreach \a/\j/\k in {
    0/y_2/y_1, 70/y_4/y_3, 180/x_2/x_1, 270/x_4/x_3
  }
  {
    \draw (\a-15:24mm) -- (\a-15:26mm);
    \draw (\a-12:29mm) node {{$\j$}};
    \draw (\a+15:24mm) -- (\a+15:26mm);
    \draw (\a+12:29mm) node {{$\k$}};
    \draw (\a-15:26mm) arc (\a-15:\a+15:26mm);
  }

        \draw [fill=black] (-10:25mm) circle (0.5mm);
        \draw [fill=black] (10:25mm) circle (0.5mm);

        \draw [fill=black] (60:25mm) circle (0.5mm);
        \draw [fill=black] (80:25mm) circle (0.5mm);

        \draw [fill=black] (170:25mm) circle (0.5mm);
        \draw [fill=black] (190:25mm) circle (0.5mm);

        \draw [fill=black] (260:25mm) circle (0.5mm);
        \draw [fill=black] (280:25mm) circle (0.5mm);

    \draw [thick, decorate, decoration={snake, amplitude=.2mm, segment length=1mm}] (-10:25mm) -- (190:25mm);
    \draw [thick, decorate, decoration={snake, amplitude=.2mm, segment length=1mm}] (10:25mm) -- (170:25mm);
    \draw [thick, decorate, decoration={snake, amplitude=.2mm, segment length=1mm}] (60:25mm) -- (280:25mm);
    \draw [thick, decorate, decoration={snake, amplitude=.2mm, segment length=1mm}] (80:25mm) -- (260:25mm);

  \draw (-10:25mm) arc (-10:10:25mm);
  \draw (60:25mm) arc (60:80:25mm);
  \draw (170:25mm) arc (170:190:25mm);
  \draw (260:25mm) arc (260:280:25mm);

  \draw [thick, decorate, decoration={snake, amplitude=.2mm, segment length=1mm}] (10:25mm) arc (10:60:25mm);
  \draw [thick, decorate, decoration={snake, amplitude=.2mm, segment length=1mm}] (80:25mm) arc (80:170:25mm);
  \draw [thick, decorate, decoration={snake, amplitude=.2mm, segment length=1mm}] (190:25mm) arc (190:260:25mm);
  \draw [thick, decorate, decoration={snake, amplitude=.2mm, segment length=1mm}] (280:25mm) arc (280:350:25mm);

\end{tikzpicture}
  \end{tabular}
  \caption{Contradicting cycles for a red edge, and two crossing blue edges in $H$.}
  \label{fig:figstructural2}
\end{figure}

Therefore, the subgraph of $H$ induced by blue edges form a planar
graph. This implies that there exists a subset of $[t]$ of size at
least $t/5$ which does not contain any blue edge (note that here we
use the fact that every planar graph is 5-colorable). By slightly
abusing notation, we will only consider these intervals, and relabel
the intervals as $I_1, \cdots, I_s$ where $s \ge t/5 > k^2$.

\begin{LEMMA} \label{lemma_structural}
Let $a_1, \cdots, a_p \in [s]$ be distinct integers and let $X_{a_i}
\subset I_{a_i}'$ for all $i$. Then $X_{a_1} \cup \cdots \cup
X_{a_p}$ contains an independent set of size at least
\[ \sum_{i=1}^{p} |X_{a_i}|  - \frac{p(p-1)}{2}.  \]
\end{LEMMA}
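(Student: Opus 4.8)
The plan is to extract structural information about $G$ from the fact that, after the reduction carried out above, the auxiliary graph $H$ restricted to $I_1,\dots,I_s$ has neither red nor blue edges, so every edge of $H$ is green. First I would establish the following description of green edges: \emph{if $ij$ is an edge of $H$, then there is a single vertex $c_{ij}\in I_i'\cup I_j'$ incident to every edge of $G$ between $I_i'$ and $I_j'$.} To prove it, suppose two such $G$-edges were vertex-disjoint, say $xy$ and $x'y'$ with $x,x'\in I_i'$, $y,y'\in I_j'$, $x\neq x'$ and $y\neq y'$, where the vertices are named so that $x<x'$. If $y<y'$ then $(x_1,x_2,y_1,y_2)=(x,x',y,y')$ meets the defining condition for $ij$ to be red, while if $y>y'$ then $(x_1,x_2,y_1,y_2)=(x,x',y',y)$ meets the defining condition for $ij$ to be blue; in either case $ij$ is not green, a contradiction. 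Hence the bipartite graph formed by the $G$-edges between $I_i'$ and $I_j'$ has no matching of size two, so by K\"onig's theorem it has a vertex cover of size at most one, which supplies the vertex $c_{ij}$ (if there is no edge between $I_i'$ and $I_j'$ there is nothing to prove).

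Granting this, the lemma follows quickly. I would set $U=X_{a_1}\cup\dots\cup X_{a_p}$ and observe that, since the intervals $I_{a_1},\dots,I_{a_p}$ are pairwise disjoint and $X_{a_i}\subseteq I_{a_i}'\subseteq I_{a_i}$, this union is disjoint, so that $|U|=\sum_{i=1}^p|X_{a_i}|$. Because each $I_{a_i}'$ is an independent set, no edge of $G[U]$ lies inside a single $X_{a_i}$; hence every edge of $G[U]$ joins $X_{a_i}$ to $X_{a_j}$ for some $i\neq j$, and for such a pair the edge $a_ia_j$ is present in $H$, hence green, so this $G$-edge is incident to $c_{ij}$. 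Therefore the set $C=\{\,c_{ij}:1\le i<j\le p,\ a_ia_j\in E(H)\,\}$, which has size at most $\binom{p}{2}$, is a vertex cover of $G[U]$, and $U\setminus C$ is an independent set of $G$ of size at least $|U|-|C|\ge\sum_{i=1}^p|X_{a_i}|-\frac{p(p-1)}{2}$, as required.

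The step I expect to be the main obstacle is the green-edge claim, in particular checking that the quadruples $(x,x',y,y')$ and $(x,x',y',y)$ genuinely match the definitions of a red and a blue edge; here it helps that each $I_i'$ lies in an arc of length at most $2k$, so that the integer order on the labels coincides with the left-to-right order within the interval used implicitly by the colouring rule. A minor point worth recording is that $c_{ij}$ need only lie in $I_i'\cup I_j'$ and not necessarily in $U$, but this can only enlarge $U\setminus C$, so the bound is unaffected.
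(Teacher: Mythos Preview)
Your proof is correct and follows essentially the same approach as the paper: both arguments hinge on the observation that a green edge $ij$ forces all $G$-edges between $I_i'$ and $I_j'$ to share a common vertex, and then remove at most one vertex per pair $\{i,j\}$ to obtain the desired independent set. The only cosmetic differences are that you spell out the red/blue case analysis and invoke K\"onig's theorem explicitly, and that you delete the covering vertices $c_{ij}$ all at once rather than processing the sets $X_{a_i}$ iteratively as the paper does; neither changes the substance.
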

\begin{proof}
The proof of this lemma relies on a fact about green edges in the
auxiliary graph $H$. Assume that there exists a green edge between
$i$ and $j$ in $H$. Then by the definition, since the edge is
neither red nor blue, we know that there is no matching in $G$ of
size 2 between $I_i'$ and $I_j'$. Therefore there exists a vertex
$v$ which covers all the edges between $I_i'$ and $I_j'$.

Now consider the following process of constructing an independent
set $J$. Take $J = \emptyset$ in the beginning. At step $i$, add
$X_{a_i}$ to the set $J$. By the previous observation, for $j < i$,
all the edges between $X_{a_i}$ and $X_{a_j}$ can be deleted by
removing at most one vertex (either from $X_{a_i}$ or $X_{a_j}$).
Therefore $J \cup X_{a_i}$ can be made into an independent set by
removing at most $i-1$ vertices. By iterating the process, we can
obtain an independent set of size at least $\sum_{i=1}^{p}
\left(|X_{a_i}| - (i-1) \right) \ge \sum_{i=1}^{p} |X_{a_i}|  -
\frac{p(p-1)}{2}$.
\end{proof}

\begin{REM} As mentioned before, this lemma already implies a weaker
version of Theorem \ref{thm_mainstrong} where the bound is replaced
by $n = 240k^{5/2}$. To see this, assume that we have a graph on at
least $240k^{5/2}$ vertices. Take $X_i = I_i'$ for $i=1, \cdots,
\lceil k^{1/2} \rceil$ in this lemma and notice that $|I_i'| \ge
\min \{ k+1, \lfloor 3k^{1/2} \rfloor\}$. As we have seen before,
$|I_i'| = k+1$ cannot happen. On the other hand, $|I_i'|=\lfloor
3k^{1/2} \rfloor$ implies the existence of an independent set of
size at least
\[ \lfloor 3k^{1/2} \rfloor \cdot \lceil k^{1/2} \rceil- \frac{\lceil k^{1/2} \rceil (\lceil k^{1/2} \rceil -1)}{2} \ge (3k^{1/2}-1)k^{1/2} - \frac{k+k^{1/2}}{2} > k, \]
which gives a contradiction.
\end{REM}


\section{Proof of Theorem \ref{thm_mainstrong}}
\label{section_thm1}

In this section, we will prove Theorem \ref{thm_mainstrong} which
says that there exists a constant $c$ such that every Hamiltonian
graph on $n \ge ck^{7/3}$ with $\alpha(G) \le k$
contains a cycle of length $n-1$. We will first focus on
proving the following relaxed statement: there exists
$k_0$ such that for $k \ge k_0$, every Hamiltonian graph
on $n \ge 960 k^{7/3}$ vertices with $\alpha(G) \le k$ contains
a cycle of length $n-1$. Note that for the range $k < k_0$, since there
exists a constant $c'$ such that $c'k^{7/3} \ge 240k^{5/2}$, by the
remark at the end of the previous section, the bound $n \ge
c'k^{7/3}$ will imply pancyclicity. Therefore by taking $\max\{960,
c'\}$ as our final constant, the result we prove in
this section will in fact imply Theorem \ref{thm_mainstrong}.
By relaxing the statement as above, we may assume that $k$ is large
enough. This will simplify many calculations. In
particular, it allows us to ignore the floor and ceiling signs in
this section.

\medskip

Now we prove the above relaxed statement using the tools
we developed in the previous section. Assume that $n \ge 960 k^{7/3}$
and $k$ is large enough.
Recall that we have independent sets $I_1', \cdots, I_s'$ such that $s >
k^2$ and $|I_i'| \ge \lfloor \frac{n}{80k^2} \rfloor \ge 12k^{1/3}$ for all $i$. For
each $i$, let $M_i$ and $L_i$ be the smaller $|I_i'|/2$ vertices and
larger $|I_i'|/2$ vertices of $I_i'$ in the cycle order given in the
previous section, and call them as the {\em main set} and {\em leftover
set}, respectively. Note that $M_i$ and $L_i$ both have size at
least $6k^{1/3}$. For a vertex $v$, call a set $M_j$ (or an index $j$) as a
{\em neighboring main set} of $v$ if $v$ contains a neighbor in
$M_j$.

\begin{LEMMA} \label{lemma_mindegreereducedgraph}
There exists a subcollection of indices $S \subset [s]$ such that
the following holds. For every $i \in S$, the set $M_i$ contains at
least $3k^{1/3}$ vertices which each have at least $k$
neighboring main sets whose indices lie in $S$.
\end{LEMMA}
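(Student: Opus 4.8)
The plan is to obtain $S$ by an iterative pruning argument, and then to bound how many indices get discarded using the green-edge structure from the proof of Lemma~\ref{lemma_structural}. Start from $S=[s]$ and, as long as there is an index $i\in S$ such that fewer than $3k^{1/3}$ vertices of $M_i$ have at least $k$ neighboring main sets with indices in the current $S$, remove one such $i$ from $S$. This process terminates, and the final $S$ satisfies the conclusion by construction; the only thing to check is that it does not remove too many indices, so that $S$ is (very) nonempty — we will in fact get $|S|>s-600k^{5/3}>\tfrac12 k^2$, which is what the subsequent steps need. Enumerate the removed indices in the order they are removed, $r_1,r_2,\dots$, and for each removed $i=r_m$ let $B_i\subseteq M_i$ be the set of vertices of $M_i$ that at the moment $i$ was removed had fewer than $k$ neighboring main sets among the indices still present in $S$. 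The pruning rule together with $|M_i|\ge 6k^{1/3}$ gives $|B_i|\ge 3k^{1/3}$, and, crucially, each $v\in B_i$ has a neighbor in $M_j$ for fewer than $k$ of the indices $j$ that were removed no earlier than $i$ (it has none in $M_i$ itself, since $M_i$ is independent).

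Suppose for contradiction that at least $R:=600k^{5/3}$ indices are removed, and work with the first $R$ of them, $r_1,\dots,r_R$; for each, fix a subset $Y_{r_m}\subseteq B_{r_m}$ with $|Y_{r_m}|=3k^{1/3}$. After the planarity reduction the intervals $I_1,\dots,I_s$ span no red or blue edges, so between any two of them every edge joining $I_i'$ and $I_j'$ passes through a single common vertex, the covering vertex of the green edge $ij$ produced in the proof of Lemma~\ref{lemma_structural}. Consequently, for any two of our selected sets all edges between $Y_{r_i}$ and $Y_{r_j}$ lie on a star, and if there is at least one such edge then its centre belongs to $Y_{r_i}\cup Y_{r_j}$. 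Hence for any subcollection $T\subseteq\{1,\dots,R\}$, deleting one vertex (a star centre) for each pair $\{i,j\}\subseteq T$ having an edge between $Y_{r_i}$ and $Y_{r_j}$ turns $\mathcal Y_T:=\bigcup_{m\in T}Y_{r_m}$ into an independent set. The number of such ``conflicting'' pairs among $r_1,\dots,r_R$ is controlled by the forward-degree bound above: if $i<j$ and some vertex $v\in Y_{r_i}$ has a neighbor in $M_{r_j}$, then $r_j$ is one of the fewer than $k$ neighboring main sets of $v$ counted at the time $r_i$ was removed, so $r_i$ conflicts with fewer than $|Y_{r_i}|\cdot k=3k^{4/3}$ later indices, and the total number of conflicting pairs among $r_1,\dots,r_R$ is less than $3Rk^{4/3}$.

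Finally choose $T$ of size $p:=10k^{2/3}$ uniformly at random from $\{1,\dots,R\}$. A conflicting pair falls inside $T$ with probability $\binom{p}{2}/\binom{R}{2}$, so the expected number of conflicting pairs inside $T$ is less than $3Rk^{4/3}\cdot\binom{p}{2}/\binom{R}{2}<6k^{4/3}p^2/R=600k^{8/3}/R\le k$ by the choice of $R$; fix a $T$ achieving at most this value, so at most $k-1$ conflicting pairs. Then $|\mathcal Y_T|=p\cdot 3k^{1/3}=30k$, and removing at most $k-1$ star centres (all of which lie in $\mathcal Y_T$) leaves an independent set of size at least $30k-(k-1)>k$, contradicting $\alpha(G)\le k$. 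Hence the pruning removes fewer than $600k^{5/3}$ indices and $S$ has the asserted size. The only delicate choice is the size of $T$: it must be taken of order $k^{2/3}$ — small enough that the quadratically growing count of conflicting pairs is dwarfed by $|\mathcal Y_T|$, which is of order $k$, yet large enough that $\mathcal Y_T$ still exceeds $k$ after the star centres are deleted. The payoff of the green-edge structure is precisely that each conflicting pair costs only one deleted vertex rather than one per edge; once this is granted, everything else is bookkeeping.
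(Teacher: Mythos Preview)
Your argument is correct, but it is considerably more elaborate than the paper's. The paper never invokes the green-edge structure (Lemma~\ref{lemma_structural}) here at all. Instead it runs a single greedy process: repeatedly pick a main set $M_i$ in the current pool that has fewer than $3k^{1/3}$ vertices with $\ge k$ neighboring main sets in the pool, take $3k^{1/3}$ ``bad'' vertices of $M_i$ (each with $<k$ neighboring main sets in the pool), add them to an independent set $J$, and then delete from the pool $M_i$ together with all neighboring main sets of these vertices. Each pass enlarges $J$ by $3k^{1/3}$ while removing at most $1+3k^{1/3}(k-1)$ sets, and $J$ stays independent because every set surviving to the next pass is, by construction, not a neighboring main set of any earlier addition. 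If the process empties the pool, then $|J|\ge 3k^{1/3}\cdot s/(1+3k^{1/3}(k-1))>k$, a contradiction; otherwise the surviving pool is the desired $S$.

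What you do differently is to separate the pruning from the contradiction step, count forward conflicts among the discarded indices, and then use an averaging (random $p$-subset) argument together with the star/green-edge covering to extract a large independent set after deleting one vertex per conflicting pair. This buys you a quantitative conclusion the paper does not state, namely $|S|>s-O(k^{5/3})$, but at the cost of importing Lemma~\ref{lemma_structural} and a probabilistic selection. The paper's route is shorter and entirely self-contained for this lemma; yours is a legitimate alternative and would be the natural one if a lower bound on $|S|$ were needed later.
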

\begin{proof}
In order to find the set of indices $S$ described in the statement,
consider the process of removing the main sets which do not satisfy
the condition one by one. If the process ends before we run out of sets,
then the remaining indices will satisfy the condition.

Let $J = \emptyset$. Pick the first set $M_i$ which has been
removed. It contains at most $3k^{1/3}$ vertices which have at
least $k$ neighboring main sets. Since there are at least
$6k^{1/3}$ vertices in $M_i$, we can pick
$3k^{1/3}$ vertices in $M_i$ which have less than $k$
neighboring main sets and add them to $J$. For each such vertex
added to $J$, remove all the neighboring main sets of it. In this
way, at each step we will increase the size of $J$ by
$3k^{1/3}$ and remove at most $1 + (k-1)\cdot3k^{1/3}$ main
sets. Now pick the first main sets among the remaining ones,
and repeat what we have done
to further increase $J$.

Assume that in the end, there are no remaining sets
(if this is not the case, then we have found our set $S$).
Note that $J$ is an independent set by
construction, and since $s > k^2$, the size of it will be at least
\[ 3k^{1/3} \cdot \frac{k^2}{1+3k^{1/3}\cdot (k-1)} > k. \]
This gives a contradiction and concludes the proof
since the independence number of the graph is
at most $k$.
\end{proof}

From now on we will only consider sets which have indices in $S$.
Let a {\em semi-triangle} be a sequence of three indices $(p, q, r)$
in $S$ which lies in clockwise order on the cycle, and satisfies
either one of the following two conditions (see Figure
\ref{fig:semitriangle}).

\begin{enumerate}[(i)]
  \setlength{\itemsep}{1pt}
  \setlength{\parskip}{0pt}
  \setlength{\parsep}{0pt}
\item Type A : there exists $x_1, x_2 \in I_p', y_1, y_2 \in I_q', z_1, z_2 \in I_r'$ such that $x_1 < x_2, y_1 < y_2, z_1 < z_2$
and $\{x_1, z_1\}, \{x_2, y_1\}, \{y_2, z_2\} \in E(G)$. Moreover, there exists at least one
set $I_i'$ with $i \in S$ in the arc starting at $p$ and ending at $q$ (traverse clockwise).
\item Type B :  there exists $x_1, x_2 \in I_p', y_1, y_2 \in I_q', z_1, z_2 \in I_r'$ such that $x_1 < x_2, y_1 < y_2, z_1 < z_2$
and $\{x_1, y_1\}, \{x_2, z_1\}, \{y_2, z_2\} \in E(G)$.
\end{enumerate}
Note that $(p,q,r)$ being a semi-triangle does not necessarily imply
that $(q,r,p)$ is also a semi-triangle. Semi-triangles are constructed
so that `chords' intersect in a predescribed way. This arrangement
of chords will allow us to find contradicting cycles, once we are given
certain semi-triangles in our graph.
As an instance, one can see that a semi-triangle of Type B contains a cycle
$x_1-y_1-x_2-z_1-y_2-z_2-x_1$, see Figure \ref{fig:semitriangle}.
Recall that each set $I_j'$ lies in a consecutive interval of length
at most $2k$, and thus the length of the cycle is at least $n-6k$.
Moreover, since each set $I_j'$ is defined as the set of odd
vertices in $I_j$, the length of the cycle is at most $n-3$ (it must
miss vertices between $x_1$ and $x_2$, $y_1$ and $y_2$, and $z_1$
and $z_2$). Finally, since all the intervals $I_j$ do not intersect
$W$, the cycle is a contradicting cycle. Therefore we may assume
that no such semi-triangle exists. We will later see that
one can find a contradicting cycle even if Type A
semi-triangles intersect in a certain way.

Next lemma shows that the graph
$G$ contains many semi-triangles of Type A.

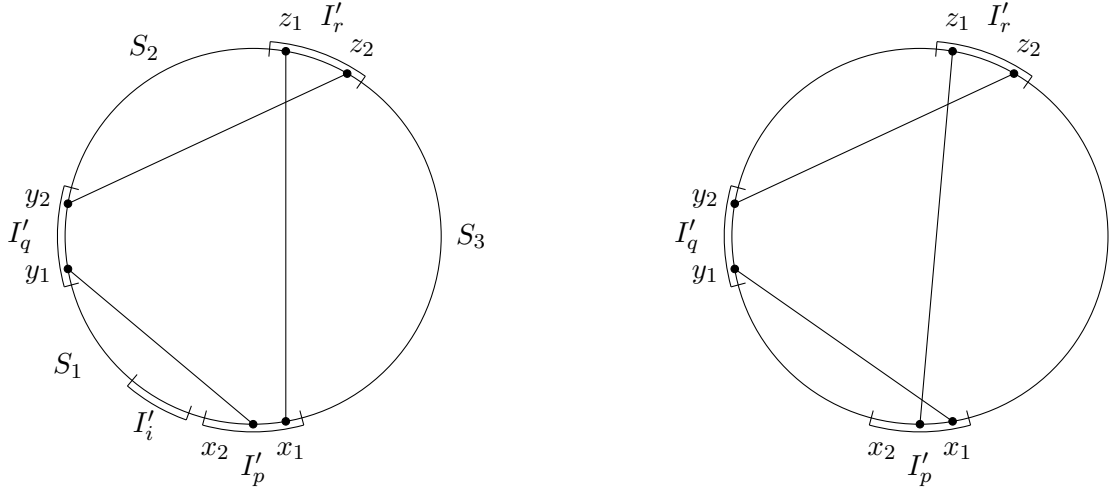
\begin{figure}
  \centering
  \begin{tabular}{ccc}

\begin{tikzpicture}
  \foreach \a / \t in {
    70/I_r', 180/I_q', 270/I_p'
  }
  {
    \draw (\a-15:24mm) -- (\a-15:26mm);
    \draw (\a+15:24mm) -- (\a+15:26mm);
    \draw (\a-15:26mm) arc (\a-15:\a+15:26mm);
    
    \draw (\a:31mm) node {{$\t$}};
  }
  \draw (230:24mm) -- (230:26mm);
  \draw (250:24mm) -- (250:26mm);
  \draw (230:26mm) arc (230:250:26mm);
  \draw (240:29mm) node {$I_i'$};

    \draw (120:29mm) node {{\bf $S_2$}};
    \draw (215:30mm) node {{\bf $S_1$}};
    \draw (0:29mm) node {{\bf $S_3$}};

		\draw [fill=black] (60:25mm) circle (0.5mm);
		\draw [fill=black] (80:25mm) circle (0.5mm);
    \draw (60:29mm) node {$z_2$};
    \draw (80:29mm) node {$z_1$};
        		
		\draw [fill=black] (170:25mm) circle (0.5mm);
		\draw [fill=black] (190:25mm) circle (0.5mm);
    \draw (170:29mm) node {$y_2$};
    \draw (190:29mm) node {$y_1$};

		\draw [fill=black] (270:25mm) circle (0.5mm);
		\draw [fill=black] (280:25mm) circle (0.5mm);
    \draw (260:29mm) node {$x_2$};
    \draw (280:29mm) node {$x_1$};
	
		\draw (80:25mm) -- (280:25mm);
		\draw (60:25mm) -- (170:25mm);
		\draw (190:25mm) -- (270:25mm);

  \draw (0:25mm) arc (0:360:25mm);
  

\end{tikzpicture} & \hspace{0.5in} & 

\begin{tikzpicture}
  \foreach \a / \t in {
    70/I_r', 180/I_q', 270/I_p'
  }
  {
    \draw (\a-15:24mm) -- (\a-15:26mm);
    \draw (\a+15:24mm) -- (\a+15:26mm);
    \draw (\a-15:26mm) arc (\a-15:\a+15:26mm);
    
    \draw (\a:31mm) node {{$\t$}};
  }

		\draw [fill=black] (60:25mm) circle (0.5mm);
		\draw [fill=black] (80:25mm) circle (0.5mm);
    \draw (60:29mm) node {$z_2$};
    \draw (80:29mm) node {$z_1$};
		
		\draw [fill=black] (170:25mm) circle (0.5mm);
		\draw [fill=black] (190:25mm) circle (0.5mm);
    \draw (170:29mm) node {$y_2$};
    \draw (190:29mm) node {$y_1$};

		\draw [fill=black] (270:25mm) circle (0.5mm);
		\draw [fill=black] (280:25mm) circle (0.5mm);
    \draw (260:29mm) node {$x_2$};
    \draw (280:29mm) node {$x_1$};
	
		\draw (80:25mm) -- (270:25mm);
		\draw (60:25mm) -- (170:25mm);
		\draw (190:25mm) -- (280:25mm);

  \draw (0:25mm) arc (0:360:25mm);
  
\end{tikzpicture}
  \end{tabular}
  \caption{Semi-triangles of Type A and Type B respectively.}
  \label{fig:semitriangle}
\end{figure}

\begin{LEMMA} \label{lemma_semitriangle}
Let $M_p$ be a fixed main set, and let $S' \subset S$ be a set
of indices such that at least $k^{1/3}$ vertices in $M_p$
have at least $k/3$ neighboring main sets in $S'$. Then
there exists a semi-triangle $(p, q_1, q_2)$ of Type A such that
$q_1,q_2 \in S'$.
\end{LEMMA}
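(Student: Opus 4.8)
The plan is to realize the required Type A semi-triangle with its six endpoints placed in prescribed main and leftover sets, so that the monotonicity conditions $x_1<x_2$, $y_1<y_2$, $z_1<z_2$ hold automatically, and — the key point — to exploit the non-existence of Type B semi-triangles so as never to have to control the way the two chords leaving $I_p'$ cross.

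Fix a set $W^\ast\subseteq M_p$ of $k^{1/3}$ vertices, each with at least $k/3$ neighboring main sets in $S'$; since each such vertex contributes $k/3$ distinct indices, $|S'|\ge k/3$. For $q\in S'$ write $\mu(q)=\{u\in I_p' : u\text{ has a neighbor in }M_q\}$, and for $v\in M_p$ write $N_{S'}(v)=\{q\in S' : v\in\mu(q)\}$. The first reduction is that it suffices to find distinct indices $q_1,q_2\in S'$ occurring in the clockwise order $p,q_1,q_2$ with at least one index of $S$ strictly between $p$ and $q_1$, an edge of $G$ joining $L_{q_1}$ to $L_{q_2}$, and two \emph{distinct} vertices $a\in\mu(q_1)$, $b\in\mu(q_2)$. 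Indeed, let $y_1\in M_{q_1}$ be a neighbor of $a$, $z_1\in M_{q_2}$ a neighbor of $b$, and let $y_2\in L_{q_1}$, $z_2\in L_{q_2}$ be the endpoints of the chosen edge; as each $M_i$ precedes $L_i$ on the cycle, $y_1<y_2$ and $z_1<z_2$. If $b<_{\mathrm{cyc}}a$ then $(x_1,x_2)=(b,a)$ makes $(p,q_1,q_2)$ a Type A semi-triangle with $q_1,q_2\in S'$; if $a<_{\mathrm{cyc}}b$ then $(x_1,x_2)=(a,b)$ makes it a Type B semi-triangle, which is impossible since a Type B semi-triangle contains a contradicting cycle. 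So the first case holds and we are done.

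It remains to produce $q_1,q_2,a,b$. Set $\mathcal{Q}=\bigcup_{v\in W^\ast}N_{S'}(v)\subseteq S'$, so $|\mathcal{Q}|\ge k/3$ and $\mu(q)\neq\emptyset$ for $q\in\mathcal{Q}$. The union $\bigcup_{q\in\mathcal{Q}}L_q$ has at least $(k/3)\cdot 6k^{1/3}=2k^{4/3}>k$ vertices, hence is not independent, so many edges of $G$ join $L_{q_1}$ to $L_{q_2}$ for distinct $q_1,q_2\in\mathcal{Q}$. For such an edge the only obstruction to finding the desired $a,b$ is that $\mu(q_1)=\mu(q_2)=\{v\}$ for a single vertex $v$, necessarily in $W^\ast$. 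Writing $\mathcal{Q}_v=\{q\in\mathcal{Q}:\mu(q)=\{v\}\}$ and $\mathcal{Q}_0=\mathcal{Q}\setminus\bigcup_v\mathcal{Q}_v$, we must rule out the case that every edge inside $\bigcup_{q\in\mathcal{Q}}L_q$ joins two leftover sets whose indices both lie in a common $\mathcal{Q}_v$. Here one uses that every edge of $H$ on $S$ is green, so each $\mu(q)$ is empty, a single vertex of $I_p'$, or the set of $I_p'$-neighbors of one vertex of $M_q$; that distinct vertices of $W^\ast$ own disjoint clusters $\mathcal{Q}_v$; and that if some $\mathcal{Q}_v$ were empty then $N_{S'}(v)\subseteq\mathcal{Q}_0$, already making $\bigcup_{q\in\mathcal{Q}_0}L_q$ independent of size $>k$. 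In the remaining configuration the sets $\{L_q:q\in\mathcal{Q}_0\}$ together with the clusters $\{L_q:q\in\mathcal{Q}_v\}$ are pairwise edge-free, and applying Lemma~\ref{lemma_structural} within each cluster and combining yields a large independent set, contradicting $\alpha(G)\le k$. Finally, since there are many edges to choose among, one can also arrange that the smaller of $q_1,q_2$ in the clockwise order from $p$ is not the first index of $S$ after $p$, which secures the remaining requirement on $(p,q_1,q_2)$.

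The trick that removes the geometric difficulty — a Type A semi-triangle forces $x_1<x_2$ while $x_1$ must reach the \emph{farther} interval $q_2$, which the degree hypothesis does not control — is the observation in the second paragraph that the opposite crossing pattern is exactly a forbidden Type B semi-triangle. The step I expect to be most delicate is the independence estimate in the third paragraph: when one cluster $\mathcal{Q}_v$ is large, Lemma~\ref{lemma_structural} is weak for that cluster, so one must combine the clusters with $\mathcal{Q}_0$ with care — balancing $|W^\ast|=k^{1/3}$ against $|L_q|\ge 6k^{1/3}$ and $|\mathcal{Q}|\ge k/3$, and using the green structure and the disjointness of the $\mathcal{Q}_v$ — to still reach more than $k$ pairwise non-adjacent vertices. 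That bookkeeping, rather than any single idea, is the crux.
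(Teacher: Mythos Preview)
Your reduction in the second paragraph is exactly the paper's key observation: once you have distinct $a\in\mu(q_1)$ and $b\in\mu(q_2)$ together with an $L_{q_1}$--$L_{q_2}$ edge, the two possible orderings of $a,b$ give Type~A or Type~B, and Type~B is already excluded. Where your argument diverges is in how you manufacture distinct $a,b$, and here there are two genuine gaps. First, the cluster bookkeeping you flag as ``the crux'' is not carried out, and the naive attempt fails: if you simply apply Lemma~\ref{lemma_structural} inside each $\mathcal{Q}_v$ and sum, a single cluster of size $\approx k/3$ contributes only $O(k^{2/3})$, and $k^{1/3}$ clusters give $O(k)$ with the wrong constant. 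The argument can be rescued, but only after noting that $N_{S'}(v)\subseteq\mathcal{Q}_v\cup\mathcal{Q}_0$ forces $|\mathcal{Q}_v|\ge k/3-|\mathcal{Q}_0|$ for \emph{every} $v$; then either $|\mathcal{Q}_0|\gtrsim k^{2/3}$ and $\bigcup_{q\in\mathcal{Q}_0}L_q$ is already an independent set of size $>k$, or every cluster is large enough to extract $\Omega(k^{2/3})$ independent vertices via Lemma~\ref{lemma_structural}, and $k^{1/3}$ disjoint clusters give $>k$. Second, ``many edges to choose among'' is not justified---you only produced one good edge---so you cannot simply avoid the first $S$-index after $p$; the clean fix is to delete that index from $S'$ at the outset (losing one neighboring main set per vertex), which is what the paper does.

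The paper sidesteps the entire cluster analysis by a single device: rather than pooling all neighboring main sets into $\mathcal{Q}$, it \emph{assigns} to each of the $k^{1/3}$ special vertices $v$ a private family of $k^{1/3}$ neighboring main sets, pairwise disjoint across the $v$'s (possible since $k^{1/3}\cdot k^{1/3}\le k/3-1$). Lemma~\ref{lemma_structural} applied to the corresponding $k^{1/3}$ leftover sets yields an independent set $J_v'$ of size $\ge(11/2)k^{2/3}$ for each $v$; the $J_v'$ are disjoint, so their union exceeds $k$ and contains an edge, which---because each $J_v'$ is independent---must go between $J_{v_1}'$ and $J_{v_2}'$ for distinct $v_1,v_2$. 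This hands you $a=v_1$, $b=v_2$ for free, with no case analysis on $\mu(q)$ at all.
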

\begin{proof}
Let $M_p$ and $S'$ be given as in the statement.
Among the sets $M_x$ with indices in $S'$, let $M_i$
be the closest one to $M_p$
in the clockwise direction. To make sure that we get a semi-triangle
of Type A, we will remove $i$ from $S'$ and only consider
the set $S'' = S' \setminus \{i\}$.
Thus we will have $k/3-1$ neighboring main sets in $S''$
for each of the given vertices.

Arbitrarily select $k^{1/3}$ vertices in $M_p$ which have at
least $k/3 - 1$ neighboring main sets in $S''$. Since for large $k$ we
have $k^{1/3} \cdot k^{1/3} \le (k/3) - 1$,
we can assign $k^{1/3}$ neighboring main sets to each selected
vertex so that the assigned sets are distinct for different
vertices. Then for a selected vertex $v \in M_p$, let $J_v$ be the
union of the leftover sets $L_x$ corresponding to the $k^{1/3}$
main sets $M_x$ assigned to $v$.
Since each set $L_x$ has size at
least $6k^{1/3}$, by Lemma \ref{lemma_structural}, $J_v$
contains an independent set of size at least $k^{1/3} \cdot
6k^{1/3} - k^{2/3}/2 \ge (11/2)k^{2/3}$. Denote this
independent set by $J_v'$.

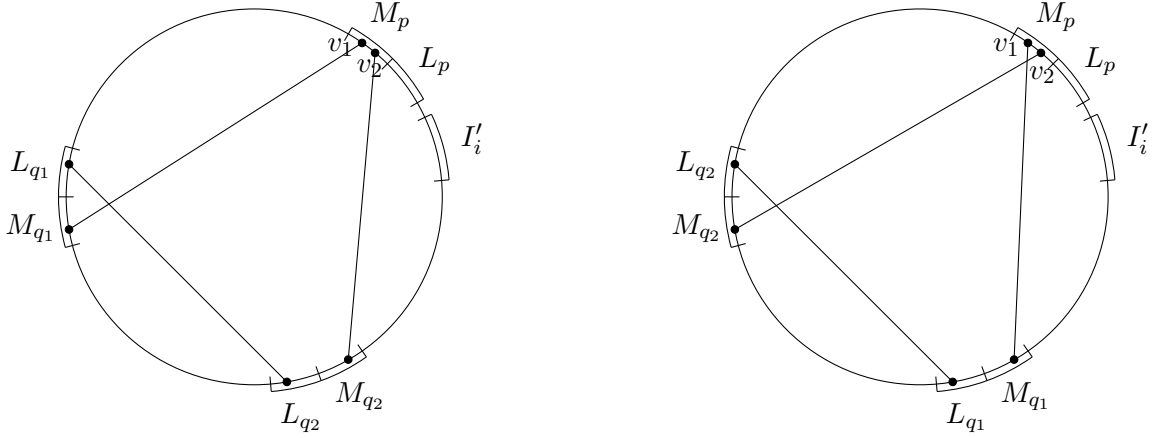
\begin{figure}
  \centering
  \begin{tabular}{ccc}

\begin{tikzpicture}
  \foreach \a / \t / \s in {
    180/L_{q_1}/M_{q_1}, 290/L_{q_2}/M_{q_2}, 45/L_{p}/M_{p}
  }
  {
    \draw (\a-15:24mm) -- (\a-15:26mm);
    \draw (\a:24mm) -- (\a:26mm);
    \draw (\a+15:24mm) -- (\a+15:26mm);
    
    \draw (\a-15:26mm) arc (\a-15:\a+15:26mm);
    
    \draw (\a-8:30mm) node {{$\t$}};
    \draw (\a+8:30mm) node {{$\s$}};
  }
  
  \draw (5:24mm) -- (5:26mm);
  \draw (25:24mm) -- (25:26mm);
  \draw (5:26mm) arc (5:25:26mm);
	\draw (15:30mm) node {$I_i'$};

		\draw [fill=black] (170:25mm) circle (0.5mm);
		\draw [fill=black] (190:25mm) circle (0.5mm);
        		
		\draw [fill=black] (280:25mm) circle (0.5mm);
		\draw [fill=black] (300:25mm) circle (0.5mm);

		\draw [fill=black] (50:25mm) circle (0.5mm);
		\draw [fill=black] (55:25mm) circle (0.5mm);
    \draw (48:23mm) node {$v_2$};
    \draw (60:23mm) node {$v_1$};
	
		\draw (190:25mm) -- (55:25mm);
		\draw (170:25mm) -- (280:25mm);
		\draw (300:25mm) -- (50:25mm);

  \draw (0:25mm) arc (0:360:25mm);
  

\end{tikzpicture} & \hspace{0.5in} & 

\begin{tikzpicture}
  \foreach \a / \t / \s in {
    180/L_{q_2}/M_{q_2}, 290/L_{q_1}/M_{q_1}, 45/L_{p}/M_{p}
  }
  {
    \draw (\a-15:24mm) -- (\a-15:26mm);
    \draw (\a:24mm) -- (\a:26mm);
    \draw (\a+15:24mm) -- (\a+15:26mm);
    
    \draw (\a-15:26mm) arc (\a-15:\a+15:26mm);
    
    \draw (\a-8:30mm) node {{$\t$}};
    \draw (\a+8:30mm) node {{$\s$}};
  }
  
  \draw (5:24mm) -- (5:26mm);
  \draw (25:24mm) -- (25:26mm);
  \draw (5:26mm) arc (5:25:26mm);
	\draw (15:30mm) node {$I_i'$};

		\draw [fill=black] (170:25mm) circle (0.5mm);
		\draw [fill=black] (190:25mm) circle (0.5mm);
        		
		\draw [fill=black] (280:25mm) circle (0.5mm);
		\draw [fill=black] (300:25mm) circle (0.5mm);

		\draw [fill=black] (50:25mm) circle (0.5mm);
		\draw [fill=black] (55:25mm) circle (0.5mm);
    \draw (45:23mm) node {$v_2$};
    \draw (60:23mm) node {$v_1$};
	
		\draw (190:25mm) -- (50:25mm);
		\draw (170:25mm) -- (280:25mm);
		\draw (300:25mm) -- (55:25mm);

  \draw (0:25mm) arc (0:360:25mm);
  

\end{tikzpicture}
  \end{tabular}
  \caption{Constructing semi-triangles, Type A and B, respectively.}
  \label{fig:constructsemitriangle}
\end{figure}

Since the sets $J_v'$ are disjoint for different vertices, we have
$| \bigcup_{v \in M_p} J_v' | \ge (11/2)k^{2/3} \cdot k^{1/3} \ge
k+1$. Therefore, by the restriction on the independence number there
exists an edge between $J_{v_1}'$ and $J_{v_2}'$ for two distinct
vertices $v_1$ and $v_2$ (the edge cannot be within one set $J_v'$
since $J_v'$ is an independent set for all $v$). Let $M_{q_1}$ be
the main set in which the neighborhood of $v_1$ lies in, and
similarly define $M_{q_2}$ so that there exists an edge between
$L_{q_1}$ and $L_{q_2}$. Depending on the relative position of
$M_{q_1}, M_{q_2}$ and $M_p$ on the cycle, the edge $\{v_1,v_2\}$
will give rise to a semi-triangle of Type A or B, see Figure
\ref{fig:constructsemitriangle} (note that the additional condition
for semi-triangle of Type A is satisfied because we removed the
index $i$ in the beginning). Since we know that there does not exist
a semi-triangle of Type B, it should be a semi-triangle of Type A.
\end{proof}

In particular, Lemma \ref{lemma_semitriangle} implies the existence
of a semi-triangle $(p,q,r)$ of Type A. Let the {\em length} of a
Type A semi-triangle be the number of sets $I_i'$ with $i \in S$ in
the arc that starts at $p$ and ends at $q$ (traverse clockwise).
Among all the semi-triangles of Type A consider the one which has
minimum length and let this semi-triangle be $(p,q,r)$. By
definition, every semi-triangle has length at least 1, and thus we
know that there exists an index in $S$ in the arc starting at $p$
and ending at $q$ (traverse clockwise). Let $i \in S$ be such an
index which is closest to $p$ (see Figure
\ref{fig:overlapsemitriangle}).

\begin{figure}
  \centering
  \begin{tabular}{ccc}

\begin{tikzpicture}
  \foreach \a / \t in {
    15/I_r', 150/I_q', 270/I_p', 240/I_i', 100/I_j', 50/I_k'
  }
  {
    \draw (\a-10:24mm) -- (\a-10:26mm);
    \draw (\a+10:24mm) -- (\a+10:26mm);
    \draw (\a-10:26mm) arc (\a-10:\a+10:26mm);

    \draw (\a:29mm) node {{$\t$}};
  }

		\draw [fill=black] (10:25mm) circle (0.5mm);
		\draw [fill=black] (20:25mm) circle (0.5mm);
		
		\draw [fill=black] (145:25mm) circle (0.5mm);
		\draw [fill=black] (155:25mm) circle (0.5mm);

		\draw [fill=black] (270:25mm) circle (0.5mm);
		\draw [fill=black] (275:25mm) circle (0.5mm);
	
		\draw (20:25mm) -- (275:25mm);
		\draw (10:25mm) -- (145:25mm);
		\draw (155:25mm) -- (270:25mm);

		\draw [fill=black] (45:25mm) circle (0.5mm);
		\draw [fill=black] (55:25mm) circle (0.5mm);
		
		\draw [fill=black] (95:25mm) circle (0.5mm);
		\draw [fill=black] (105:25mm) circle (0.5mm);

		\draw [fill=black] (240:25mm) circle (0.5mm);
		\draw [fill=black] (245:25mm) circle (0.5mm);
	
		\draw (55:25mm) -- (245:25mm);
		\draw (45:25mm) -- (95:25mm);
		\draw (105:25mm) -- (240:25mm);			


    \draw (20:25mm) arc (20:45:25mm);
    \draw [dotted] (45:25mm) arc (45:55:25mm);
    \draw (55:25mm) arc (55:95:25mm);
    \draw [dotted] (95:25mm) arc (95:105:25mm);    
    \draw (105:25mm) arc (105:145:25mm);
    \draw [dotted] (145:25mm) arc (145:155:25mm);    
    \draw (155:25mm) arc (155:240:25mm);
    \draw [dotted] (240:25mm) arc (240:245:25mm);    
    \draw (245:25mm) arc (245:270:25mm);
    \draw [dotted] (270:25mm) arc (270:275:25mm);    
    \draw (275:25mm) arc (275:360:25mm);
    \draw (0:25mm) arc (0:10:25mm);
		\draw [dotted] (10:25mm) arc (10:20:25mm);

\end{tikzpicture} & \hspace{0.5in} & 

\begin{tikzpicture}
  \foreach \a / \t in {
    15/I_r', 150/I_q', 270/I_p', 240/I_i', 340/I_j', 300/I_k'
  }
  {
    \draw (\a-10:24mm) -- (\a-10:26mm);
    \draw (\a+10:24mm) -- (\a+10:26mm);
    \draw (\a-10:26mm) arc (\a-10:\a+10:26mm);

    \draw (\a:29mm) node {{$\t$}};
  }

        \draw [fill=black] (10:25mm) circle (0.5mm);
        \draw [fill=black] (20:25mm) circle (0.5mm);

        \draw [fill=black] (145:25mm) circle (0.5mm);
        \draw [fill=black] (155:25mm) circle (0.5mm);

        \draw [fill=black] (270:25mm) circle (0.5mm);
        \draw [fill=black] (275:25mm) circle (0.5mm);

        \draw (20:25mm) -- (275:25mm);
        \draw (10:25mm) -- (145:25mm);
        \draw (155:25mm) -- (270:25mm);

        \draw [fill=black] (295:25mm) circle (0.5mm);
        \draw [fill=black] (305:25mm) circle (0.5mm);

        \draw [fill=black] (335:25mm) circle (0.5mm);
        \draw [fill=black] (345:25mm) circle (0.5mm);

        \draw [fill=black] (240:25mm) circle (0.5mm);
        \draw [fill=black] (245:25mm) circle (0.5mm);

        \draw (305:25mm) -- (245:25mm);
        \draw (295:25mm) -- (335:25mm);
        \draw (345:25mm) -- (240:25mm);


    \draw (20:25mm) arc (20:145:25mm);
    \draw [dotted] (145:25mm) arc (145:155:25mm);
    \draw (155:25mm) arc (155:240:25mm);
    \draw [dotted] (240:25mm) arc (240:245:25mm);
    \draw (245:25mm) arc (245:270:25mm);
    \draw [dotted] (270:25mm) arc (270:275:25mm);
    \draw (275:25mm) arc (275:295:25mm);
    \draw [dotted] (295:25mm) arc (295:305:25mm);
    \draw (305:25mm) arc (305:335:25mm);
    \draw [dotted] (335:25mm) arc (335:345:25mm);
    \draw (345:25mm) arc (345:360:25mm);
    \draw (0:25mm) arc (0:10:25mm);
        \draw [dotted] (10:25mm) arc (10:20:25mm);

\end{tikzpicture}
  \end{tabular}
  \caption{Two overlapping semi-triangles which give a contradicting cycle.}
  \label{fig:overlapsemitriangle}
\end{figure}
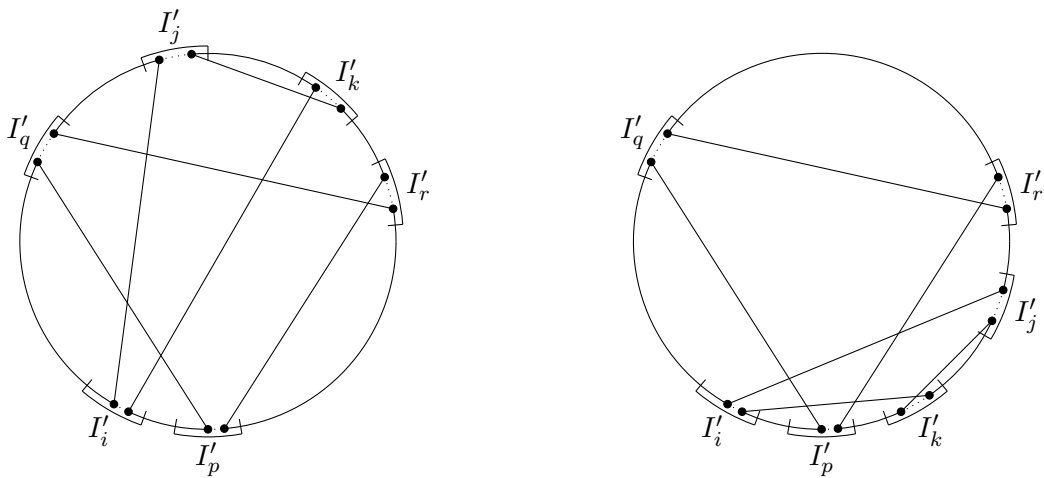

Now consider the set of indices $S_1, S_2, S_3 \subset S$ such that
$S_1$ is the set of indices between $p$ and $q$, $S_2$ is the set of
indices between $q$ and $r$, and $S_3$ is the set of indices between
$r$ and $p$ along the circle, all in clockwise order (see Figure
\ref{fig:semitriangle}). By pigeonhole principle and how we
constructed the indices $S$ in Lemma
\ref{lemma_mindegreereducedgraph}, there exists at least one set out
of $S_1, S_2, S_3$ such that at least $k^{1/3}$ vertices of $M_i$
have at least $k/3$ neighboring main sets inside it.

If this set is $S_1$, then by Lemma \ref{lemma_semitriangle} there
exists a Type A semi-triangle which is completely contained in the
arc between $p$ and $q$, and thus has smaller length than the
semi-triangle $(p,q,r)$. Since this is impossible, we may assume
that the set mentioned above is either $S_2$ or $S_3$. In either
of the cases, by Lemma \ref{lemma_semitriangle} we can find a Type A
semi-triangle $(i,j,k)$ which together with $(p,q,r)$ will give a
contradicting cycle, see Figure \ref{fig:overlapsemitriangle}
(recall that each set $I_i'$ lies in a consecutive interval of
length at most $2k$, and thus the length of this cycle is at least
$n-12k$ and at most $n-6$).
This shows that the assumption we made at the beginning on $G$ not containing
a cycle of length $n-1$ cannot hold. Therefore we have proved Theorem \ref{thm_mainstrong}.

\section{Concluding Remarks}

In this paper we proved that there exists an absolute constant $c$
such that if $G$ is a Hamiltonian graph with $n \ge ck^{7/3}$
vertices and $\alpha(G)\le k$, then $G$ is pancyclic. The main
ingredient of the proof was Theorem \ref{thm_main}, which partially
answers a question of Keevash and Sudakov, and tells us that under
the same condition as above, $G$ contains a cycle of length $n-1$.
It seems very likely that if one can answer Keevash and Sudakov's
question, even for $n = \Omega(k^2)$, then one can also resolve
Erd\H{o}s' question, by using a similar approach to that of Section
\ref{section_thm2} (see Theorem \ref{thm_mainstrong}, which is a
strengthened version of Theorem \ref{thm_main}).

\medskip

\noindent \textbf{Acknowledgement.} We would like to thank Peter Keevash for
stimulating discussions.


\begin{thebibliography}{99}

\bibitem{AmFoGe}
D. Amar, I. Fournier, A. Germa,
\newblock Pancyclism in Chv\'atal-Erd\H{o}s graphs,
\newblock {\em Graphs Combin.} 7 (1991), 101--112.

\bibitem{Bondy1}
J.A. Bondy,
\newblock Pancyclic graphs I,
\newblock {\em J. Combin. Theory Ser. B} 11 (1971) 80--84.

\bibitem{Bondy2}
J.A. Bondy,
\newblock Pancyclic graphs: Recent results, infinite and finite sets,
\newblock in : Colloq. Math. Soc. J\'anos Bolyai, Keszthely, Hungary, 1973, pp. 181--187.

\bibitem{ChEr}
V. Chv\'atal and P. Erd\H{o}s,
\newblock A note on Hamiltonian circuits,
\newblock {\em Discrete Math} 2 (1972) 111-113.

\bibitem{Erdos}
P. Erd\H{o}s,
\newblock Some problems in graph theory,
\newblock in: Hypergraph Seminar, Ohio State Univ., Columbus, Ohio,
1972, in: Lecture Notes in Math., vol. 411, Springer, Berlin, 1974,
187--190.

\bibitem{FlLiMaWo}
E. Flandrin, H. Li, A. Marczyk, and M. Wozniak,
\newblock A note on pancyclism of highly connected graphs,
\newblock Discrete Math. 286 (2004) 57--60.

\bibitem{JaOr}
B. Jackson, O. Ordaz,
\newblock Chv\'atal-Erd\H{o}s conditions for paths and cycles in graphs and digraphs: A survey,
\newblock {\em Discrete Math.} 84 (1990), 241--254.

\bibitem{KeSu}
P. Keevash and B. Sudakov,
\newblock Pancyclicity of Hamiltonian and highly connected graphs,
\newblock {\em J. Combin. Theory Ser. B} 100 (2010), 456--467.

\bibitem{Lou}
D. Lou,
\newblock The Chv\'atal-Erd\H{o}s condition for cycles in triangle-free graphs,
\newblock {\em Discrete Math.} 152 (1996), 253--257.

\end{thebibliography}
\end{document}